\def\smallskip{\vskip\smallskipamount}
\def\medskip{\vskip\medskipamount}
\def\bigskip{\vskip\bigskipamount}
\newtheoremstyle{thmstyle}{}{}{\itshape}{}{\bfseries}{ }{5pt}{}
\newtheoremstyle{exstyle}{}{}{}{}{\bfseries}{ }{5pt}{}
\newtheoremstyle{defstyle}{}{}{}{}{\bfseries}{ }{5pt}{}
\newtheoremstyle{remstyle}{}{}{}{}{\bfseries}{ }{5pt}{}
\theoremstyle{thmstyle}
\newtheorem{thm}{Theorem}[section]
\newtheorem{theorem}[thm]{Theorem}
\newtheorem{lem}{Lemma}[section]
\newtheorem{lemma}[thm]{Lemma}
\newtheorem{prop}[thm]{Proposition}
\newtheorem{proposition}[thm]{Proposition}
\newtheorem{definition}[thm]{Definition}
\theoremstyle{exstyle}
\newtheorem{example}[thm]{Example}
\theoremstyle{defstyle}
\newtheorem{defn}{Definition}[section]
\newtheorem{def-prop}[thm]{Definition-Proposition}
\newtheorem{def-lem}[thm]{Definition-Lemma}
\newtheorem{rem-convention}[thm]{Remark-Convention}
\newtheorem{def-note}[thm]{Definition-Notation}
\theoremstyle{remstyle}
\newtheorem{remark}[thm]{Remark}
\theoremstyle{remstyle}
\newcommand{\Hom}{\operatorname{Hom}}
\DeclareMathOperator*{\rad}{rad}
\DeclareMathOperator*{\modu}{mod}
\DeclareMathOperator*{\Modu}{Mod}
\newcommand{\Str}{\operatorname{Str}}
\DeclareMathOperator*{\ind}{ind}
\DeclareMathOperator*{\Ind}{Ind}
\DeclareMathOperator*{\End}{End}
\DeclareMathOperator*{\node}{node}
\newcommand*{\doublerightarrow}[2]{\mathrel{
  \settowidth{\@tempdima}{$\scriptstyle#1$}
  \settowidth{\@tempdimb}{$\scriptstyle#2$}
  \ifdim\@tempdimb>\@tempdima \@tempdima=\@tempdimb\fi
  \mathop{\vcenter{
    \offinterlineskip\ialign{\hbox to\dimexpr\@tempdima+1em{##}\cr
    \rightarrowfill\cr\noalign{\kern.5ex}
    \rightarrowfill\cr}}}\limits^{\!#1}_{\!#2}}}
\newcommand{\doublewidetilde}[1]{{%
  \mathpalette\double@widetilde{#1}%
}}
\newcommand{\double@widetilde}[2]{%
  \sbox\z@{$\m@th#1\widetilde{#2}$}%
  \ht\z@=.9\ht\z@
  \widetilde{\box\z@}%
}
\newcommand{\sturmian}[3]{r_{#1,#2}^{#3}}
\title[Infinite string bricks and Sturmian words]{Infinite string bricks and Sturmian words over some gentle algebras}
\author[Mark Deaconu, Kaveh Mousavand, Charles Paquette]{Mark Deaconu, Kaveh Mousavand, Charles Paquette} 
\address{Mark Deaconu: Department of Mathematics and Statistics, Queen's University, Canada}
\email{mark.deaconu@gmail.com}
\address{Kaveh Mousavand: Representation Theory and Algebraic Combinatorics Unit, Okinawa Institute of Science and Technology (OIST), Japan}
\email{mousavand.kaveh@gmail.com}
\address{Charles Paquette: Department of Mathematics and Computer Science, Royal Military College of Canada, Kingston ON, Canada}
\email{charles.paquette.math@gmail.com}
\subjclass [2020]{16G10, 16G20, 16D80, 05A05}
\keywords{String, Brick, Sturmian word, Gentle algebra}
\begin{document}

\maketitle

\begin{abstract}
We study infinite string modules that are bricks over some gentle algebras. In particular, we first give a complete classification of these modules over the double-Kronecker gentle algebra and prove that each family is in bijection with a family of Sturmian (binary) words. We then generalize some of our results to a larger family of gentle algebras.
\end{abstract}

\tableofcontents

\section{Introduction and outline}

In the recent developments of representation theory of finite dimensional algebras and several related domains, bricks have been playing a central role. Recall that a module is said to be a \emph{brick} if its endomorphism algebra is a division algebra. Bricks form a particular set of indecomposable modules and they govern many algebraic and geometric aspects of the representation theory of algebras. However, a full understanding of their behavior over arbitrary algebras is a formidable task (for some recent studies of bricks, see \cite{MP2} and the references therein). Meanwhile, a nice test class of algebras to study bricks is known to be the gentle algebras, particularly because one can use a range of combinatorial tools. In fact, for gentle algebras, the finite dimensional indecomposable modules are completely classified in terms of string modules and band modules; and these are all indecomposables if and only if the gentle algebra is of finite representation-type. 
In contrast, over infinite representation-type gentle algebras, indecomposables are only partially classified (see Section \ref{Section: Combinatorics of gentle algebras}, and the references therein).

Among infinite-dimensional bricks over gentle algebras, there are two natural families. First, there are the generic bricks; namely, those infinite-dimensional bricks which are of finite length over their endomorphism ring. The generic bricks over gentle algebras are in fact in bijection with the brick-bands, that is, bands for which the minimal family of (finite dimensional) modules they induce are bricks (see \cite{MP1} and \cite{BPS}). Another natural family of infinite-dimensional bricks over gentle algebras are those that can be realized as string modules. As we shall see in the following, some gentle algebras admit generic bricks but no infinite string bricks. On the other hand, from the main result of \cite{Se} and the validity of the second brick Brauer-Thrall conjecture for gentle algebras \cite{MP1}, it follows that the existence of an infinite string brick always implies the existence of a generic brick.

The main goal of this paper is to give a complete classification of infinite string bricks over the double-Kronecker gentle algebra. As we show in this work, the relatively nice combinatorics over this algebra allows us to study such infinite dimensional bricks via Sturmian (binary) words. Our construction also allows us to get some infinite string bricks over arbitrary gentle algebras admitting certain configurations. We note that brick-bands over the double-Kronecker gentle algebra (and higher rank generalization of it) have been recently studied in \cite{DL+} through the use of perfectly clustering words, which, in the case of the double-Kronecker gentle algebra, correspond to Christoffel (binary) words. We also explain the relations between these two sets of bricks and their corresponding binary words.

This manuscript is organized as follows. In Section 2, we recall some background material on binary words that we need in this paper; in particular, different characterizations of Sturmian words. In Section 3, we recall some key notions about gentle algebras and their representations and morphisms. In particular, we present some results on infinite string modules and their morphisms, which allows for a better understanding of infinite string bricks. In Section 4, we consider the case of the double-Kronecker gentle algebra. More specifically, we classify all infinite-dimensional string bricks via Sturmian words. Finally, in Section 5, we generalize this to arbitrary gentle algebras admitting some configurations that allow for similar combinatorics to the double-Kronecker gentle case.

\section{Some combinatorics on binary words}
\label{Section: binary words}
\subsection{Generalities on binary words}
A \emph{binary word} is a sequence of characters in a two-letter language. Unless specified otherwise, throughout we will assume the letter set is $\{a,b\}$. Whenever easier, by $\mathfrak{B}$ we denote the set of all binary words. The \emph{length} of a binary word is the number of letters appearing in it, when finite, and is defined to be infinite otherwise. Note that an infinite binary word can mean left-infinite, right-infinite or double-infinite. In particular, a left-infinite binary word can be written as
\[\cdots c_{-3}c_{-2}c_{-1}\]
where $c_i \in \{a,b\}$, for $i \in \mathbb{Z}_{<0}$, while a right-infinite word can be written as
\[c_{1}c_{2}c_3\cdots\]
where $c_i \in \{a,b\}$, for $i \in \mathbb{N}$. Finally, a
double-infinite binary word is written as 
\[\cdots c_{-2}c_{-1}c_0c_1c_2c_3\cdots\]
where $c_i \in \{a,b\}$, for $i \in \mathbb{Z}$. These three sets of infinite words are pairwise disjoint.

For $w \in \mathfrak{B}$, a \emph{subword} of $w$ is a finite or infinite segment of consecutive letters of $w$. In particular, a given word may appear multiple times as a subword of $w$. That being the case, we sometimes refer to an \emph{occurrence} of a word as a subword, particularly to specify that we are interested in a subword in a given position in $w$.
By a \emph{decomposition} of $w$, we mean $w$ can be written as $w=w_1w_2$, where $w_1$ and $w_2$ are non-empty subwords of $w$, and $w_1$ is neither right-infinite nor double-infinite, while $w_2$ is neither left-infinite nor double-infinite. 
In a decomposition $w = w_1w_2$, we naturally call $w_1$ a \emph{starting subword} of $w$ and $w_2$ an \emph{ending subword} of $w$.

For $w \in \mathfrak{B}$, let $w^T$ denote the \emph{transpose} of $w$, which is the binary word obtained from $w$ by writing it in opposite order. If $w = w_1w_2$, then obviously $w^T = w_2^T w_1^T$. In particular, the transpose of a left-infinite (respectively right-infinite) binary word is right-infinite (respectively left-infinite). 
With these standard terminology and notation in place, we now recall some notions that are used in the following.

\begin{defn}
Let $w$ be a binary word. 
\begin{enumerate}
    \item If $w$ is finite, the \emph{Hamming weight of $w$ with respect to letter $b$} (or simply Hamming weight of $w$) is denoted by $h(w)$ and counts the number of $b$'s occurring in $w$. 
    \item The word $w$ is called \emph{balanced} if, for every positive integer $n$, the Hamming weights of any two (finite) subwords of length $n$ of $w$ differ by at most one.  
\end{enumerate}
\end{defn}

Before presenting an example to clarify the above definitions, let us remark that, for a binary word $w \in \mathfrak{B}$, being balanced imposes a strong condition on $w$. In particular, if $w$ is not balanced, there exists a (possibly empty) subword $u$ of $w$ such that $aua$ and $bub$ both appear as subwords of $w$ (see \cite[Lemma 6.7.1]{Fo}). This observation will be helpful when Sturmian words are considered in the following (see Proposition \ref{Prop: For aperiodic, Sturmian is axa, bxb avoiding}).

 \begin{example}
        Consider the right-infinite binary word $w = abababababab\cdots$. 
        We observe that $w$ is periodic, and one can easily check that subwords of even length $n$ all have Hamming weight $n/2$, whereas the subwords of odd length $n$ have Hamming weights $(n-1)/2, (n+1)/2$. Hence, $w$ is balanced.

        In contrast, the finite binary word $w' = babbababaa$ is not balanced. In particular, $aa$ and $bb$ are two subwords of length $2$ that differ in Hamming weights by $2$.

    \end{example}

\begin{defn}
The \emph{complexity function} is the map $p:\mathfrak{B}\times \mathbb{Z}_{>0} \to \mathbb{Z}_{>0}$ which, for every binary word $w$ and each positive integer $n$, gives $p(w,n)$ as the number of distinct subwords of length $n$ in $w$.
\end{defn}

The complexity function is a useful tool in decising whether a binary word is periodic (see Proposition \ref{Prop: periodic via complexity function}). Moreover, it helps to better understand properties of Sturmian words, which will be introduced in the following (see Definition \ref{Definition: Sturmian}).

\begin{defn}\label{Def: Periodic words}
Let $w$ be an infinite binary word. Then 
\begin{enumerate}
    \item $w$ is \emph{periodic} if there exists a finite subword $\sigma$ of $w$ such that $w=\sigma \sigma \sigma \cdots$, or $w= \cdots \sigma\sigma\sigma$, or $w=\cdots \sigma\sigma\sigma \cdots$.
    \item $w$ is \emph{eventually right-periodic} if $w$ admits a decomposition $w = w_1w_2$ with $w_2$ periodic. 
    \item $w$ is \emph{eventually left-periodic} if $w$ admits a decomposition $w = w_1w_2$ with $w_1$ periodic.
    \item 
    We say $w$ is \emph{aperiodic} if none of the above hold.

\end{enumerate}
\end{defn}

As an useful application of the complexity function, in the following proposition we list some alternative ways to determine whether a given one-sided infinite word is eventually periodic.

\begin{prop}[Theorem 1.3.13 in \cite{Lo}]\label{Prop: periodic via complexity function}
Let $w$ be a one-sided infinite binary word in $\mathfrak{B}$. The following are equivalent:
    \begin{enumerate}
        \item $w$ is eventually periodic.
        \item there exists a positive integer $n$ with $p(w,n) \le n$.
        \item there exists an integer $N$ such that $p(w,n) \le N$, for all $n$. 
    \end{enumerate}
\end{prop}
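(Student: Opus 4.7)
The plan is to prove the cyclic chain of implications $(1) \Rightarrow (3) \Rightarrow (2) \Rightarrow (1)$, since $(3) \Rightarrow (2)$ is immediate (if $p(w,n) \le N$ for all $n$, then in particular $p(w,n) \le n$ for any $n \ge N$).

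For $(1) \Rightarrow (3)$, I would decompose $w = w_1 w_2$ with $w_2$ periodic of period, say, $\sigma$ of length $p$. Any length-$n$ subword of $w$ is determined by its starting position. Positions lying entirely inside the periodic tail $w_2$ yield at most $p$ distinct subwords, because shifting the start by $p$ gives the same subword. Positions that overlap the finite prefix $w_1$ (or the boundary between $w_1$ and $w_2$) contribute at most $|w_1|$ additional starting positions. Hence $p(w,n) \le |w_1| + p =: N$, uniformly in $n$.

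The substantive direction is $(2) \Rightarrow (1)$, and my plan is to exploit the monotonicity and ``determinism'' of the complexity function. For a right-infinite word every length-$n$ subword that occurs at some position $i$ admits at least one right-extension (namely the subword at position $i$ of length $n+1$), so $p(w,n+1) \ge p(w,n)$. I would first argue that the hypothesis $p(w,n) \le n$ forces equality $p(w,m) = p(w,m+1)$ for some $m \le n$. Indeed, if the function were strictly increasing on $\{1,\dots,n\}$, then $p(w,n) \ge p(w,1) + (n-1)$; noting that either $p(w,1)=1$ (in which case $w$ is a constant word, already periodic) or $p(w,1)=2$ (forcing $p(w,n) \ge n+1$, a contradiction), the desired $m$ must exist.

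Next, $p(w,m) = p(w,m+1)$ means that each length-$m$ factor of $w$ has a unique right-extension. A standard bookkeeping argument then shows this determinism propagates: each length-$(m+k)$ factor has a unique right-extension for all $k \ge 0$, so $p(w,m+k) = p(w,m)$ for all $k\ge 0$, giving (3). Finally, to conclude eventual periodicity, I would consider the sequence of length-$m$ factors $(u_i)_{i\ge 1}$ read off the positions of $w$. Since the alphabet of such factors is finite and each $u_i$ determines $u_{i+1}$ (via the unique right-extension), the map $u_i \mapsto u_{i+1}$ on the finite set of occurring factors is a deterministic dynamical system, so the orbit is eventually periodic; translating back to $w$ yields a decomposition $w = w_1 w_2$ with $w_2$ periodic.

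The main obstacle I anticipate is the careful bookkeeping in the determinism-propagation step: one must verify that the bijection between length-$m$ factors and length-$(m+1)$ factors is compatible with the right-extension map, and that this forces the length-$(m+1)$ subwords to be in bijection with length-$(m+2)$ subwords under the corresponding right-extension map, and so on. Once this is in place, the conclusion of eventual periodicity is a standard pigeonhole argument applied to the finite state space of length-$m$ factors.
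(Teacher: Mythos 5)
The paper offers no proof of this proposition: it is quoted verbatim from the literature (Theorem 1.3.13 in Lothaire, i.e.\ the Morse--Hedlund theorem), so there is nothing in the source to compare your argument against line by line. That said, your proposal is a correct and complete reconstruction of the standard proof: the chain $(1)\Rightarrow(3)\Rightarrow(2)\Rightarrow(1)$, the bound $p(w,n)\le |w_1|+p$ for the eventually periodic case, the monotonicity $p(w,n+1)\ge p(w,n)$ forcing a plateau $p(w,m)=p(w,m+1)$ under hypothesis $(2)$, the resulting unique-right-extension property and its propagation to all lengths $\ge m$, and the pigeonhole argument on the deterministic sequence of length-$m$ factors are exactly the ingredients of the textbook argument. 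The only cosmetic point worth flagging is that the statement concerns one-sided infinite words, which in this paper's conventions includes left-infinite ones; your argument is phrased for right-infinite words (right-extensions, periodic tail $w_2$), and you should either note that the left-infinite case follows by applying the result to the transpose $w^T$ or run the symmetric argument with left-extensions. This does not affect correctness.
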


\begin{remark}
Let $w \in \mathfrak{B}$ be a one-sided infinite word which is not eventually periodic. Then, Proposition \ref{Prop: periodic via complexity function} implies that, for any $n \in \mathbb{Z}_{>0}$, the smallest possible value that $p(w,n)$ can attain is $p(w,n)=n+1$. As we shall see later, this is equivalent to $w$ being a Sturmian word (see Definition \ref{Definition: Sturmian}).
\end{remark}

\subsection{Sturmian words and Christoffel words}

 Many definitions recalled below are taken from \cite{Lo}, or appear as slight modifications of them. Hence, for more details, the reader is referred to the section on Sturmian words from the aforementioned reference (also, see \cite{Fo} and the references therein). We start with the following definition, which allows us to give a unified definition of Sturmian and Christoffel words. For a better understanding, compare the following definition with Remark \ref{rem: cutting sequence} and Example \ref{ex: cutting sequence}. A more representation-theoretical comparison between Sturmian and Christoffel words appears at the end of Section \ref{Subsect: Some related combinatorics of words over gentle algebras}.

\begin{definition} 
Let $D$ be an interval in $\mathbb{R}$. For $m > 0$ and $c\in \mathbb{R}$, the binary word over $D$ with respect to the line $y = mx+c$ is defined through the following steps:
\begin{enumerate}
    \item [(I)] On the line $y= mx + c$, consider the sequence of all points $$ \ldots , p_{-1} = (b_{-1}, a_{-1}), p_0 = (b_0, a_0), p_1=(b_1, a_1), p_2 = (b_2, a_2), p_3 = (b_3, a_3), \ldots$$ where $ \cdots < b_{-1} < b_0 < b_1 < b_2 < \ldots$ are in $D$ such that, for each $i$, either $b_i$ or $a_i$ is an integer. 

    \item [(II)] From the sequence $(\ldots, b_{-1}, a_{-1}, b_0, a_0, b_1, a_1, \ldots)$, consider the subsequence that consists of only the integers; denote this subsequence by $S$.

    \item [(III)] Turn $S$ into a binary word in $\mathfrak{B}$, by replacing each $b_i$ with the letter $b$ and each $a_i$ with the letter $a$. 
\end{enumerate}
The resulting word in $\mathfrak{B}$ is denoted $\sturmian{m}{c}{D}$, and it is called the \emph{(lower) cutting word} of the line $y=mx+c$ over the domain $D$. 
Moreover, as the binary word obtained from a special choice of $D$ and $c$ in the above construction, the infinite word $\sturmian{m}{0}{(0,\infty)}$ in $\mathfrak{B}$ is known as the \emph{characteristic word of slope $m$}.
\end{definition}

\begin{remark}
    \label{rem: cutting sequence}

\begin{enumerate}
    \item For the interval $D$ and a line $y=mx+c$, the lower cutting word $\sturmian{m}{c}{D}$ can be interpreted as the word generated by recording each intersection of the line with the integer grid over the interval $D$. With each intersection of a vertical line corresponding to the letter $b$, and each intersection of a horizontal line corresponding to the letter $a$. In particular, crossing a lattice point would correspond to a $b$ and then an $a$ in that order.
    \item One could dually define the upper cutting word for the line $y=mx+c$ over an interval $D$. The rule is very similar, except when a lattice point is crossed. Each intersection of a vertical line corresponding to the letter $b$, each intersection of a horizontal line corresponding to the letter $a$, and each crossing of a lattice point would correspond to an $a$ followed by a $b$.
\end{enumerate}
        
        \end{remark}

        \begin{example} \label{ex: cutting sequence}
Let $D = (0, \infty)$ and consider the line $y=mx+0$, with $m = \frac{2}{1 + \sqrt{5}}$. 
Observe that the line will never go through a lattice point in $D$.  
        
\begin{figure}
    \centering
            \begin{tikzpicture}[scale=0.75]
            % Draw the grid
            \draw[step=1cm,gray,very thin] (0,0) grid (8,5);
            
            % Draw the line with irrational slope alpha ≈ 0.618 (reciprocal of the golden ratio)
            \draw[red, thick] (0,0) -- (8,4.944);
            
            % Mark and label vertical crossings (V)
            \foreach \x/\y in {1/0.618, 2/1.236, 3/1.854, 4/2.472, 5/3.090, 6/3.708, 7/4.326, 8/4.944}{
              \fill[blue] (\x,\y) circle (2pt);
              \node[above left] at (\x,\y) {\small b};
            }
            
            % Mark and label horizontal crossings (H)
            \foreach \x/\y in {1.618/1, 3.236/2, 4.854/3, 6.472/4}{
              \fill[green] (\x,\y) circle (2pt);
              \node[below right] at (\x,\y) {\small a};
            }
            
            % Optional: Add labels for axes
            \draw[->] (0,0) -- (8.5,0) node[right] {$x$};
            \draw[->] (0,0) -- (0,5.5) node[above] {$y$};
            
            \end{tikzpicture}
    \caption{Geometric presentation of lower cutting word $\sturmian{m}{0}{D}$ for $D = (0, \infty)$ and $m = \frac{2}{1 + \sqrt{5}}$.}
    \label{Fig: lower cutting}
\end{figure}

        Then, we have $\sturmian{m}{0}{D} = babbababbabb \cdots$, known as the \emph{golden string}. Note that if we change the domain $D$ to $D' = [0, \infty)$, then the above word would have a $ba$ appended to its left, so that it becomes $\sturmian{m}{0}{D'} = bababbababbabb \cdots$. If we instead take the upper cutting word over $[0, \infty)$, we get the binary word $abbabbababbabb \cdots$ 
   \end{example}
   
\begin{remark}
    \begin{enumerate}
        \item Note that if $m, c$ and $1$ are $\mathbb{Q}$-linearly independent, then the line $y=mx+c$ does not go through any lattice points. That being the case, it is evident that, for each $i$, exactly one of $b_i, a_i$ is an integer. 
        \item Observe that if $m < 1$, then the subword $aa$ never occurs in $\sturmian{m}{c}{D}$. Similarly, if $m > 1$, then the subword $bb$ never occurs in $\sturmian{m}{c}{D}$.
        \item Obviously, the line $y=mx+c$ goes through at least two lattice points if and only if $m$ is rational. If this is the case and $D$ is an infinite interval, then $\sturmian{m}{c}{D}$ is either periodic or eventually periodic.
        
    \end{enumerate}
\end{remark}

\begin{definition}
    Let $w$ be an infinite binary word in $\mathfrak{B}$. Then $w$ is said to be \emph{Sturmian} if $w$ is both balanced and aperiodic.
\end{definition}

We now list some alternative characterizations of Strumian words that are useful in the following. For a comprehensive study of Sturmian words and their connections with the rotation sequences, continued fractions and dynamical systems, we refer to \cite[Chapter 6]{Fo}.
    \begin{proposition}\label{Definition: Sturmian}
        (Theorem 2.1.13 in \cite{Lo}) For an infinite length word $w$ in $\mathfrak{B}$, the following are equivalent:
        \begin{enumerate}
            \item $w$ is Sturmian.
            \item $w$ is aperiodic and for the complexity function we have $p(w,n) = n+1$, for all positive integers $n$.
            \item 
            There exist a infinite interval $D$ and an irrational number $m$, such that $w$ is a upper or lower cutting word of the line $y=mx+c$ over $D$; that is $w = \sturmian{m}{c}{D}$, or $w$ is the upper cutting version of it.
        \end{enumerate}
    \end{proposition}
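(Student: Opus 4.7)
The plan is to prove $(1)\Rightarrow(2)$ and $(2)\Rightarrow(1)$ combinatorially, and $(1)\Leftrightarrow(3)$ geometrically.

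For $(1)\Rightarrow(2)$, aperiodicity of $w$ together with Proposition \ref{Prop: periodic via complexity function} gives $p(w,n)\geq n+1$ for all $n$. For the reverse inequality, say that a length-$n$ subword $v$ of $w$ is \emph{right-special} if both $va$ and $vb$ occur as length-$(n+1)$ subwords of $w$; the number of right-special length-$n$ subwords equals $p(w,n+1)-p(w,n)$. I would show that balance forces at most one right-special subword of each length, so $p(w,n+1)\leq p(w,n)+1$; combined with $p(w,1)=2$, this yields $p(w,n)\leq n+1$. The key point is that two distinct right-special length-$n$ subwords, after sufficiently many joint right- and left-extensions, would produce two subwords of some common length whose Hamming weights differ by at least two.

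For $(2)\Rightarrow(1)$, assume $w$ is aperiodic with $p(w,n)=n+1$ for all $n$ and suppose, for contradiction, that $w$ is not balanced. By \cite[Lemma 6.7.1]{Fo}, there is a finite (possibly empty) subword $u$ of $w$ such that both $aua$ and $bub$ occur in $w$. Setting $\ell=|u|+2$, the subwords $aua$ and $bub$ have equal length $\ell$ and Hamming weights differing by $2$. Combined with the tight complexity $p(w,\ell)=\ell+1$, a direct count of the extensions of length-$\ell$ subwords of $w$ on both sides forces more than $n+1$ distinct length-$n$ subwords for some $n$, contradicting the hypothesis.

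For $(3)\Rightarrow(1)$, let $w=\sturmian{m}{c}{D}$ with $m$ irrational (the upper-cutting case is symmetric). Aperiodicity is immediate: a period in $w$ would correspond to a nonzero integer vector $(p,q)$ with $q=mp$, impossible for $m\notin\mathbb{Q}$. For balance, the number of horizontal (resp.\ vertical) integer lines crossed by $y=mx+c$ over an $x$-interval $[x_0,x_0+L]$ equals $\lfloor m(x_0+L)+c\rfloor-\lfloor mx_0+c\rfloor$ (resp.\ $\lfloor x_0+L\rfloor-\lfloor x_0\rfloor$); each depends on $x_0$ only through a fractional part and therefore varies by at most one as $x_0$ varies. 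Translating back via the construction of $w$, any two length-$n$ subwords of $w$ have Hamming weights differing by at most one.

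The main obstacle will be $(1)\Rightarrow(3)$. Given a Sturmian $w$, balance guarantees that the asymptotic frequency $\alpha$ of the letter $b$ in $w$ exists, and aperiodicity (via $(1)\Rightarrow(2)$ already established) forces $\alpha$ irrational. Setting $m:=\alpha/(1-\alpha)$, I would construct the intercept $c$ by matching the positions of $b$'s in $w$ with the vertical crossings of $y=mx+c$; the three-distance theorem provides existence of such a $c$, and a finite-prefix comparison then shows that the resulting (upper or lower) cutting word over a suitable infinite interval $D$ coincides with $w$.
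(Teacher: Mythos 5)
The paper does not prove this proposition at all: it is quoted as Theorem 2.1.13 of \cite{Lo} and used as a black box, so there is no in-paper argument to compare yours against. Your outline does follow the architecture of the standard Morse--Hedlund/Coven--Hedlund proof, and the directions $(1)\Rightarrow(2)$ and $(3)\Rightarrow(1)$ are essentially right as sketched. Two small remarks there: for $(1)\Rightarrow(2)$ no ``joint extensions'' are needed --- if $u\neq v$ are right-special of the same length with longest common suffix $s$, then (WLOG) $u$ ends in $as$ and $v$ ends in $bs$, and right-specialness already places $asa$ and $bsb$ among the factors, contradicting balance via \cite[Lemma 6.7.1]{Fo}; and Proposition \ref{Prop: periodic via complexity function}, which you invoke for the lower bound $p(w,n)\ge n+1$, is stated only for one-sided words, so for double-infinite $w$ you need the bi-infinite version (bounded complexity forces genuine shift-periodicity).

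The genuine gap is in $(2)\Rightarrow(1)$. Your proposed mechanism --- that the presence of $aua$ and $bub$ together with $p(w,\ell)=\ell+1$ ``forces more than $n+1$ distinct length-$n$ subwords for some $n$'' --- is false as stated. The double-infinite word $\cdots aaabbb\cdots$ satisfies $p(w,n)=n+1$ for every $n$ (its length-$n$ factors are exactly the words $a^{i}b^{n-i}$, $0\le i\le n$) and is unbalanced (take $u$ empty: both $aa$ and $bb$ occur), yet no complexity overshoot ever happens. So no ``direct count'' that ignores aperiodicity can close this implication; the aperiodicity hypothesis must be used in an essential way. The classical Coven--Hedlund argument takes $u$ of \emph{minimal} length with $aua$ and $bub$ both factors, shows $u$ is a palindrome, and combines this with the uniqueness of extensions granted by $p(w,n)=n+1$ and with aperiodicity (in the bi-infinite case, via recurrence of factors) to reach a contradiction; this is the hardest step of the theorem and your sketch does not contain it. The implication $(1)\Rightarrow(3)$ is likewise only asserted: the three-distance theorem does not by itself produce the intercept $c$. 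The standard route is to show that a balanced aperiodic word is mechanical, first proving that the letter frequency exists (a consequence of balance) and then pinning down $c$ as the intersection of the nested intervals of admissible intercepts determined by longer and longer prefixes. As written, these two directions are statements of intent rather than proofs.
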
 

    \begin{remark}
\begin{enumerate}
    \item If $w$ is one sided-infinite, then $p(w,n)=n+1$ implies that $w$ is not eventually periodic. Indeed, for each one-sided infinite word $w$ that is eventually periodic, $p(w,n)$ is globally bounded. On the other hand, we note that for the double-infinite word $\cdots aaabbb \cdots$, we have $p(w,n)=n+1$, but this binary word is clearly not aperiodic (see Definition \ref{Def: Periodic words}).
    \item Every right-infinite Sturmian word $\sturmian{m}{c}{D}$ has a unique corresponding characteristic Sturmian word $\sturmian{m}{0}{(0,\infty)}$, often called \emph{the characteristic Sturmian word of the same slope}. 
\end{enumerate}

    \end{remark}

We now introduce another important type of binary words which relate to the study of Sturmian words; see Proposition \ref{link Sturmian and Christoffel}. In Section \ref{Subsect: Some related combinatorics of words over gentle algebras}, we further elaborate on the conceptual connections between these two types of binary words in the context of the representation theory of some gentle algebras. 
We encourage the reader to compare the definition with Remark \ref{Remark: Christoffel words} and Example \ref{Example: Christoffel words}.

\begin{definition}\label{Def: Christoffel word}
    Let $m$ be a positive rational number expressed in reduced form $m = p/q$; that is $\rm{gcd}(p,q)=1$.  
    Starting with the domain $D = (0,q)$ and the cutting word $\sturmian{m}{0}{D}$, the \emph{Christoffel word} $m$ is defined as $c_m:=b(\sturmian{m}{0}{D})a$. 
\end{definition}

\begin{remark}\label{Remark: Christoffel words}
    The Christoffel word $c_m$ introduced in the preceding definition can also be described as follows. First, draw a lattice path from $(0,0)$ to $(q,p)$ which is under the line segment from $(0,0)$ to $(q,p)$ but is as close as possible to this line (see Example \ref{Example: Christoffel words}). Reading the lattice path from $(0,0)$ to $(q,p)$, any horizontal unit segment is replaced by $b$, and any vertical unit segment is replaced by $a$. Observe that every such word necessarily starts with the letter $b$ and ends with the letter $a$.
\end{remark}

\begin{example}\label{Example: Christoffel words}
Consider the domain $D = (0,8)$, and let $m = 5/8$. Then, $\sturmian{5/8}{0}{(0,8)} = babbababbab$. Therefore, the corresponding Christoffel word is $c_m = bbabbababbaba$. Figure \ref{Fig: Christoffel figure} shows how to interpret this word as a lattice path staying as close as possible but below the line segment without touching it, except at the extremities.  

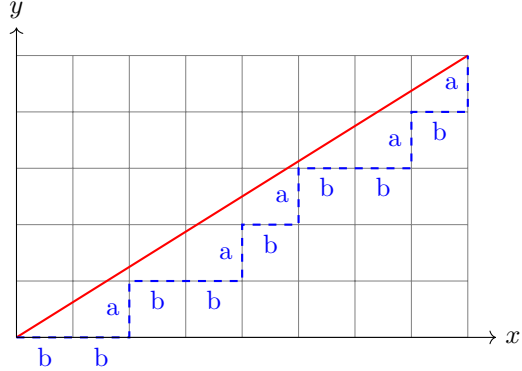
\begin{figure}
    \centering
\begin{tikzpicture}[scale=0.75]
    % Draw the grid
    \draw[step=1cm,gray,very thin] (0,0) grid (8,5);
    
    % Draw the line with irrational slope
    \draw[red, thick] (0,0) -- (8,5);
    
    % Underlying lattice path with binary labels
    \draw[blue, thick, dashed] (0,0) 
        -- (1,0) node[midway,below] {b}  % Horizontal (1)
        -- (2,0) node[midway,below] {b}  % Horizontal (1)
        -- (2,1) node[midway,left] {a}   % Vertical (0)
        -- (3,1) node[midway,below] {b}  % Horizontal (1)
        -- (4,1) node[midway,below] {b}  % Horizontal (1)
        -- (4,2) node[midway,left] {a}   % Vertical (0)
        -- (5,2) node[midway,below] {b}  % Horizontal (1)
        -- (5,3) node[midway,left] {a}   % Vertical (0)
        -- (6,3) node[midway,below] {b}  % Horizontal (1)
        -- (7,3) node[midway,below] {b}  % Horizontal (1)
        -- (7,4) node[midway,left] {a}   % Vertical (0)
        -- (8,4) node[midway,below] {b}  % Horizontal (1)
        -- (8,5) node[midway,left] {a};  % Vertical (0)
    
    % Axes
    \draw[->] (0,0) -- (8.5,0) node[right] {$x$};
    \draw[->] (0,0) -- (0,5.5) node[above] {$y$};
\end{tikzpicture}
    \caption{Geometric presentation of Christoffel words}
    \label{Fig: Christoffel figure}
\end{figure}

%the sturmian will be 101101011011
\end{example}

The following facts are well known, and they are equivalent to Proposition 2.2.24 and Corollary 2.2.28 in \cite{Lo}. 
    
 \begin{prop} \label{link Sturmian and Christoffel}
     \begin{enumerate}
        \item (Prop 2.2.24. \cite{Lo}) If $w$ is a characteristic word, then there is an infinite sequence of Christoffel words $bw_1a, b w_2 a, \ldots$ such that $w_1, w_2, \ldots$ are all the starting subwords of $w$.
         \item (Corollary 2.2.28 \cite{Lo}) If $bwa$ is a Christoffel word, then there exists a Sturmian word $w'$ having $w$ as a starting subword.
     \end{enumerate}
\end{prop}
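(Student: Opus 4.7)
The plan is to prove both parts via the geometric interpretation of cutting sequences in the integer lattice, exploiting rational approximation of an irrational slope by continued fraction convergents.

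For part (1), let $w = \sturmian{m}{0}{(0, \infty)}$ be the characteristic word of irrational slope $m$, and let $p_n/q_n$ denote the $n$-th continued fraction convergent of $m$. The corresponding Christoffel word is $c_{p_n/q_n} = b w_n a$, where $w_n = \sturmian{p_n/q_n}{0}{(0, q_n)}$. I would first show that $w_n$ agrees with the starting subword of $w$ of length $p_n + q_n - 2$. The essential estimate is that the two lines $y = mx$ and $y = (p_n/q_n)x$ differ vertically by at most $|m - p_n/q_n|\cdot q_n < 1/q_{n+1}$ over the domain $(0, q_n)$. Consequently, for $\ell \leq p_n - 1$, the horizontal crossings at $x = \ell/m$ (for the irrational line) and $x = \ell q_n/p_n$ (for the rational line) are interleaved with the vertical crossings $x = 1, \ldots, q_n - 1$ in exactly the same order. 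Hence the two cutting sequences agree on their first $p_n + q_n - 2$ letters, with the irrational sequence merely having an extra terminal $a$ near $x = q_n$. Since $|w_n| = p_n + q_n - 2 \to \infty$, the middle parts of the Christoffel words $c_{p_n/q_n}$ exhaust arbitrarily long starting subwords of $w$.

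For part (2), let $bwa$ be the Christoffel word of slope $p/q$ in reduced form, so that $w = \sturmian{p/q}{0}{(0,q)}$. I would choose an irrational number $m$ sufficiently close to $p/q$ and form the characteristic Sturmian word $w' = \sturmian{m}{0}{(0, \infty)}$. Running the comparison of cutting sequences from part (1) in reverse shows that the first $p + q - 2$ letters of $w'$ coincide with $w$, that is, $w$ is a starting subword of $w'$. Since $m$ is irrational, Proposition \ref{Definition: Sturmian} confirms that $w'$ is aperiodic and balanced, hence Sturmian.

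The principal obstacle in both parts is the careful boundary analysis near the lattice point $(q_n, p_n)$. One must verify that no horizontal crossing $y = \ell$ of the line $y = mx$ migrates past a vertical crossing $x = k$ when $m$ is replaced by the rational approximation $p_n/q_n$. Equivalently, for every relevant $k$ one must show that the open interval between $k p_n/q_n$ and $k m$ contains no integer. The best-approximation property of the continued fraction convergents, namely $|m q_n - p_n| < 1/q_{n+1}$, is precisely what forces these intervals to be short enough and positioned so as to avoid integers, guaranteeing that the two cutting sequences agree on their common initial segment.
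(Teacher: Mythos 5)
The paper offers no proof of this proposition at all: it is quoted as a known fact, with explicit pointers to Proposition 2.2.24 and Corollary 2.2.28 of \cite{Lo}. So there is nothing to compare your argument against except the literature; what you have written is a self-contained proof sketch along the standard Diophantine/geometric lines, and it is essentially correct. Two points deserve to be made explicit. First, the ``no integer between $km$ and $kp_n/q_n$'' claim needs both halves of the estimate: since $\gcd(p_n,q_n)=1$ and $1\le k\le q_n-1$, the point $kp_n/q_n$ lies at distance at least $1/q_n$ from $\mathbb{Z}$, while $k\,|m-p_n/q_n| < k/(q_nq_{n+1}) < 1/q_n$; it is the conjunction of these two inequalities (not the smallness of the gap alone) that forces the interval to miss every integer, and your sketch only gestures at the first half. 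The same two-sided estimate, with $m$ chosen irrational and within $1/q^2$ of $p/q$, closes part (2). Second, note that you prove the reading of part (1) in which each $w_n$ is a starting subword of $w$ with $|w_n|=p_n+q_n-2\to\infty$; the literal reading ``every starting subword of $w$ occurs among the $w_i$'' is false (e.g.\ $bbaa$ is not a Christoffel word although $ba$ is a prefix of the golden string $\sturmian{m}{0}{(0,\infty)}$), but the weaker reading is exactly how the paper later uses the statement, namely that characteristic Sturmian words are limits of Christoffel words. Aside from the minor boundary bookkeeping near $x=q_n$ (which only affects finitely many $n$ and is harmless since one only needs infinitely many convergents), your argument is sound.
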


We will also need the following facts on Sturmian words.

\begin{proposition}[Prop. 2.1.3 in \cite{Lo}] \label{Prop: For aperiodic, Sturmian is axa, bxb avoiding}
    Let $w$ be an infinite aperiodic word. Then $w$ is Sturmian if and only if, for any subword $x$ of $w$, $axa$ and $bxb$ are not both subwords of $w$.
\end{proposition}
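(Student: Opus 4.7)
The plan is to split the biconditional and, in each direction, reduce to the \emph{balanced} part of the definition of Sturmian. The key external ingredient is the characterization of unbalanced binary words recalled earlier in the section (attributed to \cite[Lemma 6.7.1]{Fo}): a binary word fails to be balanced precisely when some (possibly empty) subword $u$ has both $aua$ and $bub$ as occurrences. Since Sturmian is defined as ``balanced and aperiodic'', the aperiodicity hypothesis already built into the statement reduces the problem to controlling the balanced condition.

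For the forward implication, I would assume that $w$ is Sturmian, so that $w$ is in particular balanced. Suppose, toward a contradiction, that some subword $x$ of $w$ yields both $axa$ and $bxb$ as subwords. Setting $n = |x| + 2$, these are two length-$n$ subwords whose Hamming weights are $h(axa) = h(x)$ and $h(bxb) = h(x) + 2$, hence differ by $2$. This contradicts the defining property of a balanced word.

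For the reverse implication, I would assume $w$ is aperiodic and that no subword $x$ of $w$ produces both $axa$ and $bxb$ as subwords. Applying the cited criterion from \cite[Lemma 6.7.1]{Fo} contrapositively, $w$ must be balanced. Combining balancedness with the standing aperiodicity hypothesis, $w$ is Sturmian by definition, completing the proof.

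The only substantive combinatorial content is the unbalanced-word criterion of \cite[Lemma 6.7.1]{Fo}, which the paper invokes as a black box; the remaining steps are purely formal bookkeeping with Hamming weights against the definition of Sturmian. In particular, no genuine obstacle arises beyond correctly recognizing that the hypothesis ``no $x$ with both $axa$ and $bxb$ as subwords'' is exactly the contrapositive form needed to conclude balancedness.
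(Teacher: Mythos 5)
Your proof is correct and follows exactly the route the paper signposts: the paper gives no proof of its own (the result is cited from Lothaire), but it explicitly recalls the unbalancedness criterion from \cite[Lemma 6.7.1]{Fo} immediately before this proposition and notes that it will be used here, which is precisely your reverse implication; the forward implication via the Hamming-weight gap of $2$ between $axa$ and $bxb$ is the standard and correct complement.
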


\begin{proposition}[Prop. 2.1.22 in \cite{Lo}] \label{sturmian with lattice point}
    Let $w$ be a right-infinite Sturmian word in $a,b$ with the property that both $aw, bw$ are Sturmian. Then $w = \sturmian{m}{0}{(0,\infty)}$, with $m$ irrational.
\end{proposition}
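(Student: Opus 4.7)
The plan is to identify $w$ with the characteristic Sturmian word of its own slope by showing that every prefix of $w$ must be a \emph{left-special} factor in the corresponding Sturmian language. First, let $m$ denote the asymptotic frequency of the letter $b$ in $w$; since $w$ is balanced and aperiodic, this limit exists and is irrational. Prepending a single letter does not affect this limit, so $aw$ and $bw$ are Sturmian of the same slope $m$. I would then invoke the standard fact (from \cite[Chapter~2]{Lo}) that the set $F_m$ of finite factors of a right-infinite Sturmian word depends only on its slope $m$; consequently $w$, $aw$, and $bw$ all have factor set $F_m$.

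Next, observe that the factors of $aw$ consist of the factors of $w$ together with all words of the form $au$ where $u$ ranges over the (possibly empty) prefixes of $w$, and the analogous statement holds for $bw$. Since both $aw$ and $bw$ are Sturmian of slope $m$, these new factors must lie in $F_m$: for every prefix $u$ of $w$, both $au$ and $bu$ belong to $F_m$, meaning that every prefix of $w$ is a left-special factor of $F_m$.

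To finish, I would use the characterization that the left-special factors of $F_m$ are exactly the prefixes of the characteristic word $\sturmian{m}{0}{(0,\infty)}$. From Proposition~\ref{Definition: Sturmian}(2) one has $p(w,n) = n+1$, hence $p(w,n+1) - p(w,n) = 1$; a standard counting argument (each length-$n$ factor admits one or two length-$(n+1)$ left-extensions, with the excess counting left-special factors) then yields a unique left-special factor of each length. Since left-specialness is preserved under taking prefixes, these factors form the prefixes of a single right-infinite word $s$, and one identifies $s$ with the cutting sequence of the line $y = mx$ over $(0,\infty)$ either via the Christoffel approximations of Proposition~\ref{link Sturmian and Christoffel} or directly from the cutting-word construction. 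Since every prefix of $w$ is a prefix of $s$ and both words are right-infinite, $w = s = \sturmian{m}{0}{(0,\infty)}$, as required.

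The main obstacle will be the last identification: explicitly matching the left-special ``spine'' $s$ with the cutting word $\sturmian{m}{0}{(0,\infty)}$. This step sits outside the combinatorial machinery developed in the excerpt and would ultimately rely either on the substitutive/continued-fraction description of characteristic Sturmian words or on a careful geometric analysis of the grid intersections of $y = mx$ near the origin; both approaches are standard but require some bookkeeping to make rigorous.
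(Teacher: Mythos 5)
The paper offers no proof of this statement: it is quoted directly as Proposition 2.1.22 of \cite{Lo}, so there is no in-paper argument to compare yours against. Taken on its own, your proposal follows the standard textbook route (the one underlying Lothaire's treatment) and its skeleton is sound: prepending a letter preserves the slope, the factor set of a Sturmian word depends only on its slope, so every prefix of $w$ is left-special in $F_m$; the identity $p(w,n)=n+1$ forces a unique left-special factor of each length, these nest into a single right-infinite spine $s$, and $w=s$. Two points deserve attention. First, a bookkeeping slip: with the paper's conventions ($b$ for vertical crossings, $a$ for horizontal ones), the geometric slope $m$ of the cutting line equals the asymptotic ratio of $a$'s to $b$'s, so the frequency of $b$ is $1/(1+m)$ rather than $m$; as written, the $m$ you define would not be the $m$ appearing in $\sturmian{m}{0}{(0,\infty)}$. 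Second, the final identification of the spine $s$ with $\sturmian{m}{0}{(0,\infty)}$, which you explicitly defer, is the one step that still needs an argument; the cleanest route inside this paper's framework is to note (as in Example \ref{ex: cutting sequence}) that the lower and upper cutting words over $[0,\infty)$ are $ba\cdot\sturmian{m}{0}{(0,\infty)}$ and $ab\cdot\sturmian{m}{0}{(0,\infty)}$, so both $a\,\sturmian{m}{0}{(0,\infty)}$ and $b\,\sturmian{m}{0}{(0,\infty)}$ are Sturmian of slope $m$; hence every prefix of $\sturmian{m}{0}{(0,\infty)}$ is left-special and uniqueness of the spine gives $s=\sturmian{m}{0}{(0,\infty)}$. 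With those repairs the argument is complete, though it leans on several standard facts (existence and irrationality of letter frequencies, slope determines the factor set, recurrence guaranteeing that every factor has a left extension) that lie outside the excerpt and would need citations to \cite{Lo}.
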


\begin{prop}[Prop 2.1.23 in \cite{Lo}] \label{Both sa and sb}
    Let $w$ be a right-infinite Sturmian word and let $s$ be a finite word. Consider $w'$ the corresponding characteristic Sturmian word of the same slope. Then $s$ is such that both $sa, sb$ are subwords of $w$ if and only if $s^T$ is a starting subword $w'$.
\end{prop}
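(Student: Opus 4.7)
The plan is to identify the factors $s$ of $w$ admitting both right-extensions $sa$ and $sb$ with the reversals of prefixes of $w'$ by matching both to the unique right-special factor of length $|s|$, the passage between the two going through closure of the Sturmian factor set under reversal. First I would observe that since $w$ and $w'$ are both right-infinite Sturmian words of the same slope $m$, they share the same set of finite factors. This follows from the cutting-sequence characterization in Proposition \ref{Definition: Sturmian}: the combinatorics of integer-axis crossings of a line of slope $m$ on any long subinterval depends on $m$ alone and not on the intercept $c$. Consequently, the hypothesis ``both $sa$ and $sb$ are subwords of $w$'' is equivalent to the same statement with $w$ replaced by $w'$, and I may work inside $w'$.

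The next step exploits the complexity identity $p(w,n)=n+1$ from Proposition \ref{Definition: Sturmian}(2). Every factor of length $n+1$ arises by extending a factor of length $n$ on the right by either $a$ or $b$, so passing from $n+1$ factors of length $n$ to $n+2$ of length $n+1$ forces exactly one factor of length $n$, call it $r_n$, to admit two right-extensions; thus $r_n$ is the unique right-special factor of length $n$. A symmetric count on left-extensions produces a unique left-special factor $\ell_n$ of length $n$. The Sturmian factor set is closed under reversal (visible geometrically from the cutting-sequence picture by reflecting the half-plane, or algebraically from the rotation characterization), and reversal interchanges right-special and left-special factors, so by uniqueness $\ell_n = r_n^T$.

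It remains to identify the length-$n$ starting subword $u_n$ of $w'$ with $\ell_n$. For this I would use the bi-extendability of the characteristic word: both $aw'$ and $bw'$ are themselves Sturmian of slope $m$. This is the converse direction of Proposition \ref{sturmian with lattice point} and can also be seen directly by perturbing $y=mx$ to $y=mx \pm \varepsilon$ for small $\varepsilon > 0$ in the cutting construction, which prepends the missing letter without altering the rest of the prefix structure. From this, both $au_n$ and $bu_n$ occur in Sturmian words of slope $m$, hence occur in $w$ by the first paragraph, so $u_n$ is left-special. Uniqueness then forces $u_n = \ell_n = r_n^T$, and the required chain of equivalences is immediate: both $sa, sb$ are subwords of $w$ iff $s$ is the right-special factor of length $n:=|s|$, iff $s = r_n$, iff $s^T = u_n$, that is, iff $s^T$ is a starting subword of $w'$.

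The main obstacle is exactly the bi-extendability of $w'$ used in the previous paragraph: Proposition \ref{sturmian with lattice point} as stated in the excerpt gives only the implication ``bi-extendable $\Rightarrow$ characteristic'', whereas the argument requires the converse. This converse is the other half of the standard biconditional in Lothaire and is obtained by translating the defining line slightly in the cutting-sequence construction; once this is in hand, the rest of the proof is a clean combination of the complexity count with closure under reversal.
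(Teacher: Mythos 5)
The paper does not prove this statement at all: it is quoted verbatim as Proposition 2.1.23 of Lothaire and used as a black box, so there is no in-paper argument to compare against. Judged on its own, your proof is correct and is essentially the standard one: the complexity identity $p(w,n)=n+1$ yields a unique right-special factor $r_n$ of each length, closure of the Sturmian language under reversal transports this to a unique left-special factor $r_n^T$, and bi-extendability of the characteristic word identifies its length-$n$ prefix with that left-special factor, giving the chain of equivalences. Two small points. First, you correctly flag that Proposition \ref{sturmian with lattice point} as stated in the paper only gives ``bi-extendable $\Rightarrow$ characteristic'' and that you need the converse; your perturbation argument is fine, and one can also read it off the paper's own Example \ref{ex: cutting sequence}, where enlarging the domain or switching to the upper cutting word prepends $ba$ and $ab$ respectively, exhibiting $aw'$ and $bw'$ as suffixes of cutting words of the same slope. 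Second, your ``symmetric count on left-extensions'' implicitly assumes every length-$n$ factor admits a left extension, which for a right-infinite word is not automatic from the definition; it follows from uniform recurrence of Sturmian words, but you do not actually need it, since the reversal-closure argument you give in the very next clause already delivers both existence and uniqueness of the left-special factor. Neither point is a gap in substance.
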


\section{Combinatorics of gentle algebras}\label{Section: Combinatorics of gentle algebras}
\subsection*{Notations and Conventions}

Throughout, $k$ denotes an algebraically closed field, $A$ is always assumed to be a finite dimensional associative $k$-algebra with a multiplicative identity. By $\modu A$ we denote the category of all finitely generated right $A$-modules, whereas $\Modu A$ denotes the category of all right $A$-modules.
We let $\ind(A)$ and $\Ind(A)$ denote the set of all isomorphism classes of indecomposable modules, respectively in $\modu A$ and $\Modu A$.

By a \textit{quiver} we always mean a finite and connected directed graph, formally given by a quadruple $Q=(Q_0,Q_1,s,t)$, consisting of two finite sets $Q_0$ and $Q_1$, together with two functions $s, t:Q_1\to Q_0$. Elements of $Q_0$ and $Q_1$ are respectively called \textit{vertices} and \textit{arrows} of $Q$. Moreover, for $\alpha:i \rightarrow j$ in $Q_1$, the vertex $s(\alpha)=i$ is its \textit{source} and $t(\alpha)=j$ is its \textit{target}. Unless specified otherwise, we typically use lower case Greek letters $\alpha$, $\beta$, $\gamma$, $\ldots$ to denote arrows of $Q$. 

A \emph{path of length} $d\geq 1$ in $Q$ is a finite sequence of arrows $\gamma_{1}\gamma_{2}\cdots\gamma_{d}$ such that $s(\gamma_{j+1})=t(\gamma_{j})$, for every $1 \leq j \leq d-1$. Moreover, to each vertex $i \in Q_0$ we also associate a path of length $0$, denoted $e_i$, called the \textit{lazy path} (or stationary path) at vertex $i$. For each $i \in Q_0$, we have $s(e_i) = t(e_i) = i.$ The \emph{path algebra} of $Q$, denoted by $kQ$, is generated by the set of all paths (including the lazy paths) as a $k$-vector space. The multiplication in $kQ$ is induced by the concatenation of paths and is extended to $kQ$ by bilinearity. By $R_Q$ we denote the \emph{arrow ideal} in $kQ$, that is, the two-sided ideal generated by all elements of $Q_1$. A two-sided ideal $I \subseteq kQ$ is called \emph{admissible} if $R_Q ^m \subseteq I \subseteq R_Q^2$, for some $m\geq 2$. Consequently, $kQ/I$ is always a basic and connected finite dimensional algebra. In fact, every basic and connected finite dimensional algebra $A$ over an algebraically closed field $k$ admits a presentation of this form. For the standard notations and terminology, as well as the well-known rudiments of the representation theory of associative algebras, we refer to \cite{ASS}.

\subsection{Gentle algebras}
Here we recall some standard notions in the context of gentle algebras, freely used in the following. First, recall that a finite dimensional algebra $A = kQ/I$ is said to be a \emph{gentle algebra} if the following hold:

\begin{enumerate}
\item At each vertex $v \in Q_0$, there are at most two incoming arrows to $v$ and at most two outgoing arrows from $v$.
\item For each arrow $\alpha \in Q_1$, there is at most one $\beta \in Q_1$ starting at $t(\alpha)$ such that $\alpha\beta \in I$, and also at most one $\gamma\in Q_1$ starting at $t(\alpha)$ such that $\alpha\beta \not\in I$.

\item For each arrow $\alpha \in Q_1$, there is at most one $\gamma \in Q_1$ ending at $s(\alpha)$ such that $\gamma\alpha \in I$, and also at most one $\epsilon\in Q_1$ ending at $s(\alpha)$ such that $\epsilon\alpha\not\in I$.

\item There is a set of paths of length two that generate $I$.
\end{enumerate}

Observe that if $\alpha \in Q_1$ is a loop that starts and ends at vertex $v \in Q_0$, then it is considered both as an incoming arrow to $v$, as well as an outgoing arrow from $v$. In particular, if $A=kQ/I$ is a gentle algebra and $\alpha$ is a loop in $Q$, we must always have $\alpha\alpha \in I$, simply denoted by $\alpha^2 \in I$.

Due to the combinatorial nature of gentle algebras, they are often presented in terms of explicit quivers and relations. For a gentle algebra $A=kQ/I$, a set of quadratic relations that generate the ideal $I$ is often depicted by some dotted lines on the two arrows whose composition belongs to $I$, as shown in the example below.

\begin{example}\label{Example of gentle algebra}
Let $A=kQ/I$, where $Q$ is the following quiver, and $I=\langle \alpha\beta, \delta\gamma 
\rangle$. Note that the quadratic relation $\alpha \beta$ and $\delta\gamma$ are depicted by the dotted lines on the corresponding composition of arrows in $Q$.
It is easy to check $A=kQ/I$ satisfies all the conditions listed for gentle algebras.

\begin{figure}[!htbp]
\begin{center}
\begin{tikzpicture}[scale=0.6]
\draw [->] (-2.2,0) --(-0.4,0);
\node at (-1.3,0.3) {$\alpha$};
\node at (-2.5,0) {$^1\bullet$};
\draw [->] (0.2,0) --(1.8,0);
\node at (1,0.3) {$\beta$};
\node at (-0.2,0.1) {$\bullet^2$};
\draw [->] (-0.2,-1.9) -- (-0.2,-0.2);
\node at (2.1,0.1) {$\bullet^3$};
\draw [->] (2.2,0) --(4,0);
\node at (3,0.3) {$\gamma$};
\node at (4.3,0.1) {$\bullet^4$};

\node at (-0.3,-2) {$_6\bullet$};
\node at (-0.4,-1) {$\theta$};
\draw [->] (2,-2) --(2,-0.2) ;
\node at (2.1,-2) {$\bullet_5$};
\node at (2.3,-1) {$\delta$};
\draw [->] (1.9,-2) --(0,-2);
\node at (1,-2.3) {$\epsilon$};

-------
\draw[] (-.2,0)--(0.2,0);
  \draw [thick, dashed] [color=red] (-1,0) to [bend left=50] (0.6,0);
  \draw [thick, dashed] [color=red] (2.1,-0.75) to [bend left=50] (3,-0.1);
\end{tikzpicture}
\end{center}
\caption{A bound quiver $(Q,I)$, where $A = kQ/I$ is gentle. The dashed arcs depict the generators of $I$, given by $\alpha\beta$ and $\delta\gamma$.}
\label{figure_gentle_quiver}
\end{figure}
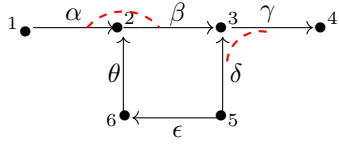
\end{example}

\subsection{Strings and bands}
Let $A=kQ/I$ be a gentle algebra, with a fixed set of quadratic relations that generate $I$. By $Q_1^{-1}$ we denote the set of formal inverses of arrows of $Q$. In particular, elements of $Q_1^{-1}$ are denoted by $\gamma^{-1}$, for $\gamma \in Q_1$, such that $s(\gamma^{-1}):=t(\gamma)$ and $t(\gamma^{-1}):=s(\gamma)$. 
A \emph{string} of length $d\geq 1$ in $A$ is a word $w = \gamma_1^{\epsilon_1}\cdots\gamma_d^{\epsilon_d}$, with $\gamma_i \in Q_1$ and $\epsilon_i \in \{+1, -1\}$, for all $i \in \{1,2,\cdots,d\}$, which satisfies the following conditions:

\begin{enumerate}
\item[(P1)] $s(\gamma_{i+1}^{\epsilon_{i+1}})=t(\gamma_i^{\epsilon_i})$ and $ \gamma_{i+1}^{\epsilon_{i+1}} \neq \gamma_i^{-\epsilon_i}$, for all $i\in \{1,\cdots,d-1 \}$;
\item[(P2)] $w$ and also $w^{-1} := \gamma_d^{-\epsilon_d}\dots\gamma_1^{-\epsilon_1}$ do not contain a subpath in $I$.
\end{enumerate}

If $w = \gamma_1^{\epsilon_1}\cdots\gamma_d^{\epsilon_d}$ and $\epsilon_i = +1$ for some $i$, we write $\gamma_i$ rather than $\gamma_i^{+1}$. We say $w$ \textit{starts} at $s(w)=s(\gamma_1^{\epsilon_1})$ and \textit{terminates} at $t(w)=t(\gamma_d^{\epsilon_d})$. We also associate a zero-length string to every vertex $i \in Q_0$, which we denote by $e_i$.

For a given gentle algebra $A$, observe that strings of positive length are nothing but some words with letters in $Q_1 \sqcup Q_1^{-1}$ that satisfy certain conditions specified above. 
Consequently, those definitions from Section \ref{Section: binary words} on words which do not rely on the fact that the words are binary can also be adapted to strings. For instance, in the context of strings over gentle algebras, one can naturally consider left-infinite, right-infinite and double-infinite strings, as well as the notion of periodicity (see Definition \ref{Def: Periodic words}). 
More specifically, when a string is formed using the concatenation of two finite strings, say $a,b$ such that $ab, ba, a^2, b^2$ are again strings of $A$, then we can use the other concepts from Section \ref{Section: binary words} on binary words, including Hamming weights and complexity. This transition will become important in the following.

As remarked above, we define \emph{one-sided infinite} and \emph{double-sided infinite} strings in $A$. More precisely, $w = \gamma_1^{\epsilon_1}\gamma_2^{\epsilon_2}\cdots$ is a \emph{right-infinite} string if for each pair of positive integers $i$ and $j$ with $i<j$, the subword $\gamma_i^{\epsilon_i}\cdots\gamma_j^{\epsilon_j}$ of $w$ is a string in $A$. That being the case, $w$ starts at $s(\gamma_1^{\epsilon_1})$ and never terminates.
The left-infinite strings are defined dually. In particular, for a right-infinite string $w = \gamma_1^{\epsilon_1}\gamma_2^{\epsilon_2}\cdots$, we have $w^{-1} := \cdots \gamma_2^{-\epsilon_2}\gamma_1^{-\epsilon_1}$ is a left-infinite string in $A$.
A word $w$ in $Q_1 \sqcup Q_1^{-1}$ is called a \emph{double-infinite string} if it can be written as $w = w_Lw_R$ where $w_L$ is left-infinite and $w_R$ is left-infinite. Obviously, a double-infinite string neither has a start nor an end vertex, and each (consecutive) finite subword of it is a string in $A$.
Let $\Str(A)$ denote the set of all strings in $A$ (including the one-sided infinite and double-infinite strings).

Let $w$ belong to $\Str(A)$. For $\gamma_i \in Q_1$, if $\gamma_i^{\epsilon_i}$ appears in $w$, then it is called a \emph{direct} arrow in $w$ if $\epsilon_i=+1$, and is called an \emph{inverse} arrow in $w$ if $\epsilon_i=-1$. Consequently, $w$ is called \emph{direct string} if every arrow in it is a direct arrow. Inverse strings are defined dually. Because $A$ is a finite dimensional algebra, $\Str(A)$ contains only finitely many direct strings (similarly inverse strings), and all such strings are finite, with a global bound on the length of direct and inverse strings in $\Str(A)$.

For a gentle algebra $A$, and a finite string $w \in \Str(A)$ of positive length, we say $w$ is a \emph{cyclic} string if $s(w) = t(w)$. A cyclic string $w$ is called a \emph{band} provided $w^2$ is a string and $w$ is not a power of a string of a strictly smaller length.
If $w = \gamma_1^{\epsilon_1}\cdots\gamma_d^{\epsilon_d}$ is a band in $\Str(A)$, then obviously $w$ is neither a direct string nor an inverse string. In particular, there exists $1 \leq i < j \leq d$ such that $\epsilon_i=-\epsilon_j$.
Due to the reasons explained below, we always identify $w$ with it is inverse $w^{-1}:= \gamma_d^{-\epsilon_d}\cdots\gamma_1^{-\epsilon_1}$, as well as with all possible cyclic permutations of $w$ and their inverses. In other words, each band in $\Str(A)$ is considered up to the cyclic permutations and their inverse. We remark that, even after the above-mentioned identifications, a gentle algebra can still admit infinitely many distinct bands (see Section \ref{section: double-Kronecker algebra}).

\medskip

The above definitions, together with a simple combinatorial argument, imply the following handy lemma.

\begin{lem}
Let $A$ be a gentle algebra. Then, $\Str(A)$ contains infintiely many strings if and only if $\Str(A)$ contains a band.
\end{lem}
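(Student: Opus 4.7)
The plan is to treat the two implications separately. For the easy direction, if $w \in \Str(A)$ is a band, then by definition $w^2$ is a string, and inductively $w^n$ is a string for every $n \geq 1$: each internal seam in $w^n$ locally looks like the seam in $w^2$, so conditions (P1) and (P2) propagate from $w^2$ to all higher powers. The powers $w, w^2, w^3, \ldots$ have strictly increasing lengths, giving infinitely many distinct elements of $\Str(A)$.

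For the nontrivial direction, my intention is to first extract a one-sided infinite string from the hypothesis, and then pigeonhole inside it to produce a band. If $\Str(A)$ already contains a one-sided infinite string, take it; otherwise $\Str(A)$ contains infinitely many finite strings, and since $Q_0$ is finite, some vertex $v$ must be the source of finite strings of unbounded length. Applying K\"onig's lemma to the finitely-branching tree whose nodes are finite strings starting at $v$ and whose edges append one letter from $Q_1 \sqcup Q_1^{-1}$ then yields a right-infinite string $w = \gamma_1^{\epsilon_1}\gamma_2^{\epsilon_2}\cdots \in \Str(A)$. Next, I would pigeonhole on the consecutive letter pairs $(\gamma_i^{\epsilon_i}, \gamma_{i+1}^{\epsilon_{i+1}})$, which lie in the finite set $(Q_1 \sqcup Q_1^{-1})^2$, obtaining indices $i < j$ with $\gamma_i^{\epsilon_i} = \gamma_j^{\epsilon_j}$ and $\gamma_{i+1}^{\epsilon_{i+1}} = \gamma_{j+1}^{\epsilon_{j+1}}$. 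Setting $u := \gamma_{i+1}^{\epsilon_{i+1}}\cdots\gamma_j^{\epsilon_j}$, the first equality forces $s(u) = t(\gamma_i^{\epsilon_i}) = t(\gamma_j^{\epsilon_j}) = t(u)$, so $u$ is cyclic, while the second equality forces the seam transition of $u^2$ to coincide with the valid transition $\gamma_j^{\epsilon_j}\gamma_{j+1}^{\epsilon_{j+1}}$ already appearing inside $w$, so $u^2 \in \Str(A)$. Replacing $u$ by its primitive root if it happens to be a proper power then delivers a band.

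The main subtlety lies in the seam. A naive pigeonhole on visited vertices alone yields $s(u)=t(u)$ but gives no control on whether the concatenation $u \cdot u$ avoids a forbidden relation or a formal cancellation at the junction, and both issues must be ruled out to satisfy (P1) and (P2). Pigeonholing on pairs of consecutive letters is what resolves this, because it makes the junction bigram of $u^2$ literally equal to a bigram already appearing in $w$ (and, by taking inverses, in $w^{-1}$), so (P1) and (P2) at the seam are inherited for free from $w$ being a string. The final passage from $u$ to its primitive root is a standard finite-word step and poses no further obstacle.
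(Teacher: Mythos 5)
Your proof is correct; the paper itself gives no argument for this lemma, stating only that it follows from ``a simple combinatorial argument,'' and what you have written is precisely that argument filled in. Both directions are handled soundly: the quadratic relations of a gentle algebra make (P1) and (P2) local conditions on consecutive letters, which justifies both the claim that all powers $w^n$ of a band are strings and your key step of pigeonholing on bigrams rather than on vertices so that the seam of $u^2$ is literally a bigram already occurring in the right-infinite string produced by K\"onig's lemma.
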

In the following example, we clarify some of the notions introduced above.

\begin{example}
Let $A=kQ/I$ be the gentle algebra given in Example \ref{Example of gentle algebra}. Observe that $w =\beta \delta^{-1}\epsilon \theta \alpha^{-1}$ is a string in $\Str(A)$. 
Moreover, $w^{-1}:=\alpha \theta^{-1}\epsilon^{-1} \delta \beta^{-1}$ also belongs to $\Str(A)$, which is the formal inverse of $w$.
The longest direct string (respectively, inverse string) in $\Str(A)$ is $\epsilon \theta \beta \gamma$ (respectively, $\gamma^{-1}\beta^{-1}\theta^{-1}\epsilon^{-1}$).

Consider $u=\beta \delta^{-1}\epsilon \theta$ in $\Str(A)$. It is easy to check that $u$ is a band in $\Str(A)$. In particular, $u$ is a cyclic string and $u^m \in \Str(A)$, for every $m\in \mathbb{Z}_{>0}$, and $u$ is not a power of any smaller string. Observe that, using $u$ or $u^{-1}$, we can generate arbitrarily long strings in $\Str(A)$, and $u^{m}$ is a string of length $4m$. Hence, $\Str(A)$ contains infinite strings, as well as infinitely many strings of finite lengths. 

In fact, for each pair of distinct vertices $i$ and $j$ in $Q_0$, there exist infinitely many strings which start at $i$ and end at $j$. Moreover, we can explicitly generate a right-infinite string $a:=uu\cdots$, a left-infinite string $b:=\cdots uu$, as well as a double-sided infinite string $c:=\cdots uuu \cdots$. Obviously, $a$, $b$ and $c$ are periodic. 
However, we can also generate a non-periodic left-infinite string $\alpha v v\cdots$, where $v:=\theta^{-1} \epsilon^{-1} \delta \beta^{-1}$, which is eventually periodic. Similarly, we can generate non-periodic right-infinite string in $\Str(A)$ which is eventually periodic. In fact, in this example, all infinite strings in $\Str(A)$ are eventually periodic (but not necessarily periodic).

Observe that $v$ is the inverse of a cyclic permutation of the band $u$. Hence, although $u$ and $v$ specify two distinct strings in $\Str(A)$, they give the same band. In fact, up to the cyclic permutations of $u$ and their inverses, $\Str(A)$ contains a unique band, but infinitely many distinct finite cyclic strings.
\end{example}

\subsection{String modules and their homomorphisms} \label{Subsection: Strings}
Let $A$ be a gentle algebra and $w\in \Str(A)$ a finite string. Then we can formally write $w = \gamma_1^{\epsilon_1}\cdots \gamma_d^{\epsilon_d}$. The corresponding walk on $Q$ determined by $w$ can be expressed as the sequence $$\xymatrix{x_{1} \ar@{-}^{\gamma_1}[r] & x_2 \ar@{-}^{\gamma_{d-1}}[r] \cdots & x_{d}\ar@{-}^{\gamma_d}[r] & x_{d+1}}.$$
Here, $x_1,\ldots, x_{d+1}$ denote the vertices visited by $w$, \emph{a priori} multiple times, and the orientation of each arrow $\gamma_i$ is suppressed in this notation. To $w$, we associate the \textit{string module} given by the representation $ M(w) := ((V_i)_{i \in Q_0}, (\varphi_\alpha)_{\alpha\in Q_1})$, where, for each $i\in Q_0$, the vector space $V_i$ is
$$\begin{array}{cccccccccccc}
V_i & := & \left\{\begin{array}{ccl} \displaystyle\bigoplus_{j: x_j = i}kx_j & & \text{if } i = x_j \text{ for some } j \in \{1,\ldots, d+1\}\\ 0 & & \text{otherwise} \end{array}\right.
\end{array}$$
and for each $\alpha \in Q_1$, the linear transformation $\varphi_\alpha:V_{s(\alpha)}\rightarrow V_{t(\alpha)}$ is given with respect to the bases of $V_{s(\alpha)}$ and $ V_{t(\alpha)}$, as following:
$$\begin{array}{cccccccccccc}
\varphi_\alpha(x_k) & := & \left\{\begin{array}{lcl} x_{k-1} & & \text{if } \alpha = \gamma_{k-1} \text{ and } \epsilon_k = -1\\ x_{k+1} & & \text{if } \alpha = \gamma_{k} \text{ and } \epsilon_k = 1\\  0 & & \text{otherwise} \end{array}\right.
\end{array}$$
Observe that, $\dim_k(V_i) = \#\{j \in \{1,\ldots, d+1\} \,|\, \ x_j = i\}$, for any $i \in Q_0$. Moreover, for any string $w \in \Str(A)$, we always have that $M(w) \cong M(w^{-1})$ as representations of $Q$. In other words, viewed as $A$-modules, $M(w)$ and $M(w^{-1})$ are isomorphic. 
As explained in the next subsection, we can extend the above definition to associate an indecomposable module to each infinite string.

We further remark that, to every band in $\Str(A)$, one can associate a family of indecomposable $A$-module, the so-called band modules. For the explicit construction of band modules, we refer to \cite{BR}. In fact, for any gentle algebra $A$, all of the indecomposable modules in $\modu A$ are given by string modules or band modules, and these two subfamilies are known to be disjoint. We emphasize that each band $w$ in $\Str(A)$ gives rise to an infinite family of (non-isomorphic) indecomposable $A$-modules of the same dimension, as well as an infinite family of string modules given by substrings of $w^m$, where $m\in \mathbb{Z}_{>0}$.
Because band modules are not used in this work, we do not recall an explicit definition of them.

For each $w \in \Str(A)$, the \emph{diagram} of $w$ is a pictorial presentation of the string module $M(w)$ that consists of a sequence of up and down arrows, drawn from left to right. 
In particular, starting from vertex $s(w)$, for every direct arrow we put a right-down arrow outgoing from the current vertex, whereas for each inverse arrow we put a left-down ending at the current vertex. These notions, as well as the construction of a string module, are illustrated in the following example.

\begin{example}
Let $A=kQ/I$ be the same algebra as in Example \ref{Example of gentle algebra}. The diagram of the string $w =\beta \delta^{-1}\epsilon \theta \alpha^{-1}$ in $\Str(A)$ and the module $M(w)$ are as follows:
\begin{center}
\begin{tikzpicture}[scale=0.67]
%from left to right
-------------------------
%Diagram
-------------------------
\node at (-5,0.1) {$\bullet^2$};
\draw [->] (-5,0) -- (-4.05,-1);
\node at (-4.9,-0.5) {$\beta$};
--
\node at (-3.9,-1.12) {$\bullet_3$};
\draw [->] (-3.1,0.1) -- (-4,-1);
\node at (-3.8,-0.5) {$\delta$};
\node at (-3,0.1) {$\bullet^5$};
--
\draw [->] (-3,0) -- (-2.05,-1);
\node at (-2.8,-0.5) {$\epsilon$};
\node at (-1.9,-1) {$\bullet^6$};
--
\draw [->] (-2,-1.1) -- (-1.05,-2.1);
\node at (-1.8,-1.6) {$\theta$};
\node at (-0.9,-2.2) {$\bullet_2$};
--
\draw [->] (-0.05,-1.05) -- (-1,-2.05);
\node at (-0.8,-1.6) {$\alpha$};
\node at (0.15,-1) {$\bullet_1$};
---------------------------------
\end{tikzpicture}
%String Module
%---------------------------------
\begin{tikzpicture}[scale=0.67]
\draw [->] (5.8,0) --(7.6,0);
\node at (6.6,-.7) {$\begin{bmatrix}
    0 \\
    1 \\
    \end{bmatrix}$};
\node at (5.5,-0.1) {$\bullet$};
\node at (5.5,0.3) {$k$};
\draw [->] (7.8,0) -- (9.8,0);
\node at (9,0.4) {$\begin{bmatrix}
    1 & 0\\
    \end{bmatrix}$};
\node at (7.7,-0.1) {$\bullet$};
\node at (7.9,0.4) {$k^2$};
\draw [->] (7.7,-1.9) -- (7.7,-0.2);
\node at (7.7,-2.1) {$\bullet$};
\node at (7.7,-2.5) {$k$};
\node at (7.3,-1.2) {$\begin{bmatrix}
    0 \\
    1 \\
    \end{bmatrix}$};
\draw [->] (10,-2) --(10,-0.2) ;
\node at (10,-2.1) {$\bullet$};
\node at (10.1,-2.5) {$k$};
\node at (10.3,-1) {$1$};
\draw [->] (9.9,-2.1) --(8,-2.1);
\node at (9,-2.4) {$1$};
\node at (10,0) {$\bullet$};
\node at (10.1,0.4) {$k$};

\draw [->] (10.2,0) --(12,0) ;
\node at (12.1,0) {$\bullet$};
\node at (12.1,0.4) {$0$};
\end{tikzpicture}
\end{center}
\end{example}

\begin{definition}\label{Def: general envelope}
    Let $A$ be a gentle algebra, $s, w \in \Str(A)$ with $s$ a substring of $w$.
    \begin{enumerate}
        \item Given an occurrence of $s$ as a substring of $w$, we define the \emph{envelope} ${\rm env}_w(s)$ of $s$ in $w$ to be the unique substring $s'$ of $w$ containing $s$ with the property that $s'$ is obtained from $s$ by appending one letter to the left of $s$ whenever $s$ is not a left end of $w$; and appending one letter to the right of $s$ whenever $s$ is not a right end of $w$.
        
        \item We define ${\rm Env}_w(s)$ to be the set of all envelopes ${\rm env}_w(s')$ where $s'$ runs through all possible occurrences of $s$ as a substring in $w$.
    \end{enumerate}
    
\end{definition}

The following remarks clarifies some of the above points.

\begin{remark}
\begin{enumerate}
    \item If $s=w$ and $s$ is the whole string $w$, then ${\rm env}_w(s)=w$.
    \item Let $s$ be a finite and a proper substring of $w$. Then, the length of ${\rm env}_w(s)$ is exactly one more than the length of $s$ if and only if $s$ is a starting or ending proper substring of $w$.
\end{enumerate}
    
\end{remark}

The next lemma clarifies our motivations for considering the notion of envelope. 

\begin{lemma} \label{Lemma envelopes}
     Let $A$ be a gentle algebra and $w$ a string with $s$ an occurence of some finite substring of $w$. Consider the envelope ${\rm env}_w(s) = \mu s \nu$. Then
     \begin{enumerate}
         \item The string module $M(s)$ is a submodule of $M(w)$ if and only if $\mu$ is an arrow or is trivial; and $\nu$ is the inverse of an arrow or is trivial. In this case we say ${\rm env}_w(s)$ is a submodule envelope, and the occurrence $s$ is called a submodule substring of $w$.
         \item The string module $M(s)$ is a quotient module of $M(w)$ if and only if $\mu$ is the inverse of an arrow or is trivial; and $\nu$ is an arrow or is trivial. In this case we say ${\rm env}_w(s)$ is a quotient envelope, and the occurrence $s$ is called a quotient substring of $w$.
     \end{enumerate}

\end{lemma}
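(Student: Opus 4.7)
The plan is to work directly with the natural subspace of $M(w)$ determined by the given occurrence of $s$. Writing the occurrence as corresponding to vertices $x_p, x_{p+1}, \ldots, x_q$ along the walk of $w$, let $N \subseteq M(w)$ be the $k$-span of these basis vectors. The definition of the string module $M(w)$ shows that the arrow actions among the interior basis vectors of $N$ reproduce exactly those of $M(s)$, so $N$ is a submodule of $M(w)$ (and then automatically isomorphic to $M(s)$) if and only if no arrow sends a basis vector in $N$ to one outside of $N$. Since the defining formula for $\varphi_\alpha$ in a string module uses only the arrows appearing in $w$ (all other arrows of $Q$ act as zero on the basis of $M(w)$), the only obstructions can arise at the two boundaries of $s$ in $w$.

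For part (1), I would analyze the left and right boundaries separately. On the left, the only potentially problematic arrow is the one underlying the boundary letter $\mu$, call it $\gamma$. If $\mu = \gamma$ is a direct arrow, then the defining formula gives $\varphi_\gamma(x_{p-1}) = x_p$ and $\varphi_\gamma(x_p) = 0$, so nothing escapes $N$. If $\mu = \gamma^{-1}$ is an inverse arrow, then $\varphi_\gamma(x_p) = x_{p-1} \notin N$, and $N$ fails to be closed. If $s$ starts at the left end of $w$ so that $\mu$ is trivial, there is nothing to check. A symmetric analysis at the right end yields the conditions on $\nu$: a direct $\nu$ forces $x_q$ to be sent outside $N$, while an inverse or trivial $\nu$ preserves $N$. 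Combining these gives the stated equivalence.

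For part (2), one option is to argue directly using the complementary subspace $N' := \bigoplus_{x_k \notin s} k x_k$ of $M(w)$, noting that the quotient $M(w)/N'$ is naturally isomorphic to $M(s)$ exactly when $N'$ is a submodule; the analogous boundary analysis (now checking when the boundary arrow underlying $\mu$ or $\nu$ sends $N'$ into $N$ rather than the reverse) yields precisely the dual conditions on $\mu$ and $\nu$. An elegant alternative is to invoke the standard duality $\D = \Hom_k(-,k)$, which exchanges submodules and quotients between $\modu A$ and $\modu A^{\rm op}$ and sends $M(w)$ to the string module of $w^{-1}$ over $A^{\rm op}$; since $w \mapsto w^{-1}$ swaps direct and inverse letters, part (2) reduces formally to part (1).

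The main (and fairly minor) obstacle is simply keeping track of the sign conventions in the defining formula for $\varphi_\alpha$ and matching them to the direct/inverse status of the boundary letters $\mu$ and $\nu$. Once this bookkeeping is in place, both directions of each equivalence are immediate and no new ideas are required.
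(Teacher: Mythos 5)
The paper states this lemma without proof, treating it as a standard fact from the theory of string modules, so there is no argument of the authors' own to compare against. Your verification is correct and is the standard one: since $\varphi_\alpha(x_k)\in\{0,x_{k-1},x_{k+1}\}$, any failure of closure of the canonical subspace $N$ (respectively of its complement $N'$ for the quotient case) is localized at the two boundary letters $\mu$ and $\nu$, whose orientations then decide the matter; both your direct treatment of part (2) via $N'$ and the alternative reduction to part (1) by $k$-duality to $A^{\rm op}$ are sound.
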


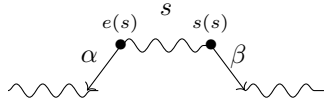
\begin{figure}[h]
\begin{center}
\begin{tikzpicture}[scale=0.6]
\draw  [-,decorate,decoration=snake] (0,0) --(2,0);
----
\draw [->] (0,0) -- (-0.75,-1);
\node at (-0.7,-0.25) {$\alpha$};
\draw [-,decorate,decoration=snake] (-2.5, -1) -- (-0.5,-1);
----
\draw [->] (2,0) -- (2.75,-1);
\node at (2.6,-0.25) {$\beta$};
\draw [-,decorate,decoration=snake] (2.75,-1) -- (4.5,-1);
----
% \node at (-1,0.8) {$a$};
\node at (1,0.8) {$s$};
\node at (2,0.5) {$_{s(s)}$};
\node at (0,0.5) {$_{e(s)}$};
\node at (2,0.0) {$\bullet$};
\node at (0,0.0) {$\bullet$};
% \node at (3,0.8) {$c$};
\end{tikzpicture}
\end{center}
\caption{General configuration of a quotient envelope of a string}
\label{fig_quotient factorization}
\end{figure}

\begin{remark} 
For $w \in \Str(A)$, and $s$ a substring of it, a quotient envelope ${\rm env}_w(s)$ is generally of the form depicted in Figure \ref{fig_quotient factorization}, where ${\rm env}_w(s)=\mu s \nu$ with $\mu = \alpha^{-1}$ and $\nu = \beta$, provided that $\mu$ and $\nu$ are non-trivial.
\end{remark}

 \begin{definition}
 Let $A$ be a gentle algebra, and let $w$ and $w'$ be two strings in $\Str(A)$ having respective substrings $s$ and $s'$. Assume that ${\rm env}_w(s) = \mu s \nu$ is a quotient envelope while ${\rm env}_{w'}(s') = \mu' s' \nu'$ is a submodule envelope. If $s'=s$ or $s'=s^{-1}$, and provided that all of $\mu, \nu, \mu', \nu'$ have positive length, we say there exists a \emph{kiss from $w$ to $w'$ along $s$}.
 \end{definition}

 For $w$ and $w'$ in $\Str(A)$, a kiss from $w$ to $w'$ is illustrated below, where the four arrows $\alpha$, $\beta$, $\gamma$, and $\delta$ must all appear and have the orientation depicted below. In this diagram, the string $w$ appears below and $w'$ appears above, and the substrings $s$ and $s'$ are identified, as illustrated by a thicker (common) substring of $w$ and $w'$:  

 \begin{center}
 \begin{tikzpicture}[scale=0.50]
 \draw  [-,decorate,decoration=snake][thick] (0.15,0) --(2.9,0);
 \node at (1.7,0.5) {$s'$};
 \node at (1.7,-0.4) {$s$};
 ------
 \draw [<-] (0,0) -- (-0.75,1);
 \draw [-,decorate,decoration=snake] (-0.8,1.1) -- (-2.5,1.1);
 \node at (-0.2,0.8) {$\gamma$};
 \node at (-2,2) {$w'$};
 %\node at (5.5,2) {$z$};
 -----
 \draw [<-] (3.1,0) -- (3.85,1);
 \draw [-,decorate,decoration=snake] (3.9,1.1) -- (5.5,1.1);
 \node at (3.2,0.8) {$\delta$};

 ------
 \draw [<-] (-0.75,-1) -- (0,-0.1);
 \draw [-,decorate,decoration=snake] (-2.5,-1) -- (-0.75,-1);
 \node at (0,-0.8) {$\alpha$};
 \node at (-2,-2) {$w$};
 %\node at (5.5,-2) {$c$};
 ------
 \draw [<-] (3.85,-1) -- (3.1,-0.1);
 \draw [-,decorate,decoration=snake] (3.85,-1) -- (5.5,-1);
 \node at (3.2,-0.8) {$\beta$};
 \end{tikzpicture}
 \end{center}
 We emphasize that the notion of kiss is directed, and also a kiss from $w$ to $w'$ can be along a vertex, that is, the substrings $s$ and $s'$ in the above definition can be of length zero. We also remark that a string $w \in \Str(A)$ may admit a kiss to itself. That being the case, we say $w$ is a \emph{self-kissing} string.

\medskip
Let $w$ and $w'$ be two strings in $\Str(A)$, and assume that $s$ and $s'$ are respective substrings of $w$ and $w'$ where ${\rm env}_w(s)$ is a quotient envelope and ${\rm env}_{w'}(s')$ is a submodule envelope. Provided $s'=s^{-1}$ or $s'=s$, one can define a nonzero homomorphism from $M(w)$ to $M(w')$. More precisely, this homomorphism, henceforth denoted by $f_s$, is defined as the composition of the projection from $M(w)$ onto $M(s)$, followed by the identification of $M(s)$ with $M(s')$ or possibly $M((s'))^{-1})$, followed by the inclusion of $M(s')$ into $M(w')$. 
We refer to $f_s$ as a {\em graph map} from $M(w)$ to $M(w')$, determined by the substrings $s$ and $s'$, respectively in $w$ and $w'$. In particular, if $s$ and $s'$ are finite, we say $f_s$ is a \emph{finite} graph map. Obviously, the graph map $f_s$ in $\Hom_A(M(w),M(w'))$ depends on the chosen substrings $s,s'$, as well as the isomorphisms between the identification of $s'$ and $s$ (or $s'$ and $s^{-1}$). Hence, we use this notation only when the occurrences of $s$ in $w$ and of $s'$ in $w'$ are clear from the context.

\begin{remark} \label{automorphisms as graph maps} 

\begin{enumerate}
    \item A graph map $f_s: M(w) \to M(w)$ can be invertible only if $s, s' \in \{w, w^{-1}\}$. If $f_s$ is not the identity map, this occurs only when $w$ is double-infinite and periodic, and the graph map is given by identifying $w$ to one of its translation. Observe that we cannot have $w = w^{-1}$.

    \item For $w \in \Str(A)$, there is a non-trivial finite graph map $f_s:M(w)\to M(w)$ if and only if $w$ has a substring $s \ne w$ such that ${\rm Env}_w(s)$ contains both a submodule envelope and a quotient envelope. In particular, if $w$ is self-kissing, it admits such a non-trivial finite graph map, but the converse is not necessarily true; that is, a finite graph map in $\End_A(M(w))$ is not necessarily induced by a kiss.
\end{enumerate}
\end{remark}

As noted above, if $w \in \Str(A)$ is infinite, we can still associate to it a string module $M(w)$. If $\{x_i \mid i \in I\}$, for $I$ an interval of $\mathbb{Z}$, is the multiset of vertices visited by $w$, then the string module $M(w)$ can be constructed in the same way as if $w$ were finite (see Section \ref{Subsection: Strings}) and $\{x_i \mid i \in I\}$ indexes a basis of it. In particular, a basis of $M(w)$ at vertex $j$ is given by the multiset $\{x_i \mid i \in I, x_i = j\}$. See \cite{C-B2} for more detailed study of infinite-dimensional string modules. 
Moreover, we note that in the study of string modules, graph maps play an important role. In particular, we recall the following theorem. Observe that this result holds in a more general setting, but we only state for gentle algebras (for details, see \cite{C-B1}).

\begin{theorem}\label{Thm-graph-maps}
Let $A=kQ/I$ be a gentle algebra. For $u$ and $v$ in $\Str(A)$, the finite graph maps from $M(u)$ to $M(v)$ form a linearly independent set in $\Hom_A(M(u), M(v))$. Moreover, if $u$ and $v$ are finite strings, then the set of all graph maps is a basis for $\Hom_A(M(u), M(v))$.
\end{theorem}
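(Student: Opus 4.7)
The plan is to prove the two assertions separately, using the explicit combinatorial description of the graph maps. Recall that $M(u)$ has a distinguished basis indexed by the vertex-visits of $u$, say $\{x_j\}_{j \in I}$, and similarly $M(v)$ has basis $\{y_k\}_{k \in J}$; each finite graph map $f_s$ is a partial matching that sends the basis vectors of $M(u)$ sitting along a fixed quotient occurrence of $s$ in $u$ bijectively to the basis vectors of $M(v)$ along a fixed submodule occurrence of $s$ (or $s^{-1}$) in $v$, killing everything else. In particular, each $f_s$ is determined by the triple (substring $s$, chosen occurrence in $u$, chosen occurrence in $v$), and its matrix in these bases has entries in $\{0,1\}$.

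For the linear independence statement, suppose $\sum_\ell c_\ell f_{s_\ell}=0$ is a nontrivial relation among pairwise distinct finite graph maps. The idea is to evaluate on a carefully chosen basis vector. Among the $\ell$ with $c_\ell\neq 0$, pick $\ell_0$ so that the quotient envelope of $s_{\ell_0}$ in $u$ has its leftmost "peak" vertex (the one lying under the extension arrow $\mu$, or else the left endpoint of $u$) as far to the left as possible; call the corresponding basis vector $x_{j_0}$. Then $f_{s_{\ell_0}}(x_{j_0})=y_{k_0}$ for a specific $k_0$. For any other $\ell$ with $c_\ell\neq 0$, the description of $f_{s_\ell}$ implies that $f_{s_\ell}(x_{j_0})$ is either zero or a basis vector $y_k$ with $k\neq k_0$, because otherwise the occurrences of $s_\ell$ and $s_{\ell_0}$ in $u$ and in $v$ would agree at $x_{j_0}$ and hence (by unique propagation along strings, using the gentleness condition at each vertex) along the full substrings, contradicting the distinctness of the triples. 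Comparing coefficients of $y_{k_0}$ forces $c_{\ell_0}=0$, a contradiction.

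For the spanning statement when $u,v$ are finite, the plan is to proceed by induction on $\dim_k M(u)$. The base case $\dim M(u)=1$ is the calculation $\Hom_A(S_i,M(v))$, whose dimension equals the number of basis vectors of $M(v)$ at vertex $i$ lying in the socle, which matches the number of length-zero submodule substrings of $v$ at $i$. For the inductive step, fix $f\in \Hom_A(M(u),M(v))$ and pick an endpoint vertex $x_1$ of the walk of $u$. Expand $f(x_1)=\sum_k c_k y_k$ over basis vectors of $M(v)$ at the appropriate vertex. For each nonzero $c_k$, attempt to build a graph map $g_k$ sending $x_1\mapsto y_k$ by matching the initial segment of $u$ against $v$ starting at $y_k$ for as long as possible, and declare the match to terminate exactly when the configuration of arrows ceases to be compatible with a submodule (respectively quotient) envelope. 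Using the gentle hypothesis — at most two arrows at each vertex, and the prescribed quadratic relations — one verifies that the resulting $g_k$ is a well-defined finite graph map, and that the difference $f-\sum_k c_k g_k$ annihilates $x_1$. Then $f-\sum_k c_k g_k$ factors through the quotient $M(u)\twoheadrightarrow M(u')$ obtained by deleting the initial visit, and one concludes by induction applied to the resulting map $M(u')\to M(v)$.

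The main obstacle is the inductive step in the spanning argument: one must argue that the gentleness of $A$ forces the "naive" extension of $f(x_1)$ along $u$ to genuinely be a linear combination of graph maps rather than some more complicated homomorphism. This is where the local uniqueness of the arrow one can compose with, imposed by the relations generating $I$, is essential — it prevents the "branching" that would occur over a general special biserial or string algebra with longer relations, and ensures that each $y_k$ in the expansion of $f(x_1)$ propagates along a uniquely determined substring of $v$, matching either a submodule or quotient occurrence in the sense of Lemma \ref{Lemma envelopes}.
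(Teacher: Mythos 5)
First, a point of orientation: the paper does not prove this statement at all --- it is imported verbatim from Crawley-Boevey \cite{C-B1} (``Maps between representations of zero-relation algebras''), and the paper explicitly defers to that reference. So any proof you give is necessarily a different route from the paper's; the question is whether your sketch actually closes. Your linear-independence half is close to correct, but the step you label ``unique propagation'' is not the real content. What you must show is that two \emph{distinct} finite graph maps never share a nonzero matrix entry $x_{j_0}\mapsto y_{k_0}$. Agreement at one position propagates the identification only along the \emph{overlap} of the two occurrences; it does not force the two substrings to coincide, so ``contradicting the distinctness of the triples'' does not follow from what you wrote --- one occurrence could a priori be a proper extension of the other. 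The missing ingredient is the boundary clash: at the position where the shorter quotient occurrence in $u$ ends but the longer one continues, the envelope of Lemma \ref{Lemma envelopes} forces the next letter of $u$ to be a direct arrow, while on the $v$-side the corresponding letter (the extension of a submodule envelope) must be an inverse arrow; since a graph map identifies letters literally, this is a contradiction. With that argument (plus the analogous check when one identification is orientation-reversing, where condition (P1) forbidding $\gamma\gamma^{-1}$ does the work), the supports are pairwise disjoint and independence is immediate; your ``leftmost peak'' selection is then unnecessary.

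The spanning half has a genuine gap in the inductive step. ``Deleting the initial visit'' gives a module quotient $M(u)\twoheadrightarrow M(u')$ only when $kx_1$ is a \emph{submodule}, i.e.\ when the first letter of $u$ is the inverse of an arrow; if the first letter is a direct arrow $\gamma_1$, then $x_1\cdot\gamma_1=x_2\neq 0$, $kx_1$ is not a submodule, and a homomorphism killing $x_1$ automatically kills the entire initial direct run --- so the claimed factorization does not exist in the form you state, and the induction as set up breaks. Moreover, the assertion that each $y_k$ occurring in $f(x_1)=\sum_k c_k y_k$ can be ``matched as long as possible'' into a well-defined graph map $g_k$ is precisely the theorem's hard content: one must use the intertwining relations $f(x\cdot\gamma)=f(x)\cdot\gamma$ to show that every such $y_k$ sits at a position admitting a submodule occurrence matching a quotient occurrence around $x_1$, and that the truncation point is forced by the gentle relations rather than merely ``declared.'' As written, this is an announcement of the strategy of \cite{C-B1} rather than a proof of it. To repair the argument you would either have to induct on a differently chosen basis vector (a socle element of $M(u)$, equivalently a ``deep point'' of the string diagram, for which $kx_1$ genuinely is a submodule and $M(u)/kx_1\cong M(u')$), or simply cite \cite{C-B1} as the paper does.
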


\subsection{Bricks and inner bricks over gentle algebras}\label{Subsection: Bricks and inner bricks over gentle algebras}
Let $A$ be an algebra. A module $M$ in $\Modu A$ is called a \emph{brick} if $\End_A(M)$ is a division $k$-algebra, that is, every nonzero $A$-homomorphism $f:M\rightarrow M$ is invertible. If $M \in \modu A$, then a brick is sometimes called a \emph{Schur representation}, and this is the case if and only if $\End_A(M)\simeq k$, because $k$ is algebraically closed. For a recent survey on the study of bricks over finite dimensional algebras, we refer to \cite{MP2} and the references therein. Below, we are primarily interested in the study of bricks over gentle algebras.

If $A$ is a gentle algebra and $w \in \Str(A)$ is a finite string, Theorem \ref{Thm-graph-maps} implies that $M(w)$ is a brick if and only if $\End_A(M(w))$ contains no (non-trivial) graph map. In particular, if $w$ is a self-kissing string, $M(w)$ is not a brick. 
In this section, we generalize the above observations to arbitrary strings. We first define some weaker notions of bricks, which will be useful in our combinatorial arguments.

\begin{definition}
Let $A$ be a gentle algebra and $w \in \Str(A)$ be an arbitrary string.
\begin{enumerate}
 \item   The string $w$ is called an \emph{inner-brick} if it admits no self-kissing.
    \item  The string $w$ is called a \emph{strong inner-brick} if $\End_A(M(w))$ contains no non-trivial finite graph map.
\end{enumerate}

\end{definition}

\begin{remark} We make the following observations.
\begin{enumerate}
    \item Every strong inner-brick is obviously an inner-brick; simply because any self-kissing of $w$ induces a non-trivial finite graph map in $\End_A(M(w))$.
    \item If $w \in \Str(A)$ is not an inner-brick, there exists a $f\in \End_A(M(w))$ whose image is a (nonzero) finite dimensional indecomposable module, and both ${\rm ker}f$ and ${\rm coker}f$ are nonzero and decomposable. 
    \item If $w$ is double-infinite, the notions of inner-brick and strong inner-brick coincide. We will see later of examples of inner-bricks that are not strong inner-bricks.
    \item A strong inner-brick $w \in \Str(A)$ needs not be a brick, because we may have a non-zero non-invertible $f\in \End_A(M(w))$ that has infinite-dimensional image. 
    For instance, for the Kronecker quiver $Q:1\doublerightarrow{\alpha}{\beta}2$, and the gentle algebra $A=kQ$, consider $w:=\alpha\beta^{-1}\alpha\beta^{-1} \cdots$. Obviously, $w$ is a right-periodic, and an easy computation shows that there is no finite graph map $M(w) \to M(w)$, so $M(w)$ is a strong inner-brick. The factorization $w = \alpha \beta^{-1}w$ shows that $w$ as the whole string can be considered as a submodule substring because its equals its own envelope, and the right end $w$ of the above factorization can be considered as a quotient substring of $w$, because ${\rm env}_w(w) = \beta^{-1}w$. Hence, there is a graph map $M(w) \to M(w)$ that is not invertible, so $M(w)$ is not a brick.
\end{enumerate}
\end{remark}

\subsection{Covering techniques}

To study infinite-dimensional string modules over a gentle algebra, it will be convenient to use some methods coming from covering theory. Here we present some necessary tools, but we restrict our exposition to the case of monomial algebras. For details on covering theory in the context of representation theory of algebras, see \cite{BG} and references therein.

\medskip

Let us fix $A$ a connected monomial algebra given by the bound quiver $(Q,I)$, that is, $A=kQ/I$, and the admissible ideal $I$ is generated by a set of paths of length at least two in $Q$.
We fix $x_0$ a vertex of $Q$ and let $G:=\pi_1(Q,x_0)$ be the set of all reduced cyclic walks in $Q$ from $x_0$ to $x_0$. We note that $G$ admits the structure of a finitely generated free group, called the \emph{fundamental group} of the underlying graph of $Q$. More precisely, the product in $G$ is given by concatenation, and using that for an arrow $\alpha: i \to j$, we have $\alpha \alpha^{-1} = \epsilon_i$ and $\alpha^{-1}\alpha = \epsilon_j$. We note that the structure of $G$ is independent of the chosen vertex $x_0$ and of the ideal $I$. Moreover, the number of free generators of $G$ is $|Q_1| - |Q_0| + 1$. 

We construct the universal Galois covering of $(Q,I)$ as follows. Let $\Gamma$ be the infinite quiver given by a tree whose vertices are given by the finite reduced walks in $Q$ starting at $x_0$. If $w', w$ are two reduced walks starting at $x_0$ with $w = w'\alpha$ where $\alpha: i \to j$ is an arrow, then we put an arrow $w' \to w$ in $\Gamma$. The group $G = \pi_1(Q,x_0)$ acts on $\Gamma$ by left multiplication. It acts freely on the vertices of $\Gamma$. We also note that the orbit quiver $\Gamma / G$ of $\Gamma$ by $G$ can be identified with $Q$. Let $F: \Gamma \to Q$ be the quiver map that sends any vertex or arrow to its $G$-orbit. Clearly, $F$ can be extended to a $k$-linear map $F: k\Gamma \to kQ$ of path algebras. We let $J$ be the monomial ideal of $\Gamma$ of relations that are sent to $I$. The induced map $F: k\Gamma/J \to kQ/I$ is the \emph{universal Galois cover} of $A = kQ/I$. Set $B: = k\Gamma /J$.

For algebra $B$ as above, let $\Modu B$ (respectively $\modu B$) denote the category of all (respectively of locally finite dimensional) representations of $B$. Observe that $G$ acts on $\Modu B$ naturally. For $g \in G$ and $M \in \Modu B$, let $M^g$ denote the $B$-module obtained via acting on $M$ by $g$. The functor $F$ given above induces a functor $F_\lambda: \Modu B \to \Modu A$, called the \emph{push-down} functor. 
To describe this functor on the representations, let $M \in \Modu B$. Then, for $x \in Q_0$, we have $F_\lambda(M)(x) = \bigoplus_{Fx' = x}M(x')$, where the direct sum runs over the preimage by $F$ of $x$ in $\Gamma_0$. Moreover, if $\alpha: x \to y$, then the linear transformation $F_\lambda(M)(\alpha) : \bigoplus_{Fx' = x}M(x') \to \bigoplus_{Fy' = y}M(y')$ is simply the diagonal ${\rm diag}(M(\alpha') \mid F\alpha' = \alpha)$. 

Note that the push-down functor preserves the total dimension of a representation. Moreover, this functor admits a right adjoint, known as the \emph{pull-up} functor $F_\bullet: \Modu A \to \Modu B$, which is induced from $F: B \to A$ so that if $M$ is an $A$-module, then $F_\bullet M = M \circ F$ where $M$ is thought of as a $k$-linear functor $M: A \to \Modu k$.  
An important property is the following. If $M \in \Modu B$, then $F_\bullet F_\lambda M = \bigoplus_{g \in G} M^g.$ Using the adjoint property, for $M \in \Modu B$, we have that 
\[
\Hom_A(F_\lambda M, F_\lambda M) \cong \Hom_B(M, F_\bullet F_\lambda M) \cong \Hom_B(M, \oplus_{g \in G}M^g). 
\]
Using this, we get the following useful fact.
\begin{prop}
    Let $M \in \Modu B$ with $\End_B(M)=k$. If $\Hom_B(M, M^g)=0$ for all $1 \ne g \in G$, then $\End_A(F_\lambda M) = k$.
\end{prop}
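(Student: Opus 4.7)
The plan is to unwind the natural isomorphism displayed immediately before the statement and then use the hypothesis to kill every summand except the one indexed by the identity. The paper has already shown
\[
\End_A(F_\lambda M) \;\cong\; \Hom_B(M, F_\bullet F_\lambda M) \;\cong\; \Hom_B\Bigl(M, \bigoplus_{g \in G} M^g\Bigr),
\]
so the content of the claim reduces to proving that the right-hand side is one-dimensional over $k$.

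First I would observe that any $\phi \in \Hom_B\bigl(M,\bigoplus_{g \in G} M^g\bigr)$ has well-defined component morphisms $\phi_g := \pi_g \circ \phi \colon M \to M^g$, obtained by post-composing with the projection onto the $g$-th summand. A small but essential remark is that we are dealing with a direct sum rather than a direct product: for each $m \in M$ and each vertex $x'$ of $\Gamma$, the element $\phi(m)(x')$ lies in $\bigoplus_{g} M(g x')$ and hence has only finitely many nonzero entries, so $\phi$ is reconstructed from the family $(\phi_g)_{g \in G}$ by the pointwise-finite formula $\phi = \sum_g \iota_g \circ \phi_g$, where $\iota_g \colon M^g \hookrightarrow \bigoplus_h M^h$ is the canonical inclusion.

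The hypothesis then does essentially all the work. Since $\Hom_B(M, M^g) = 0$ for every $g \ne 1$, each component $\phi_g$ with $g \ne 1$ must vanish, so $\phi$ factors as $\phi = \iota_1 \circ \phi_1$ with $\phi_1 \in \End_B(M)$. By the standing assumption $\End_B(M) = k$, the morphism $\phi_1$ is a scalar multiple of $\mathrm{id}_M$, and therefore $\phi$ is a scalar multiple of $\iota_1$. It remains only to note that $\iota_1$ is the unit $\eta_M \colon M \to F_\bullet F_\lambda M$ of the adjunction at $M$ and thus corresponds under the isomorphism above to $\mathrm{id}_{F_\lambda M}$; so the isomorphism sends $c \cdot \iota_1$ to $c \cdot \mathrm{id}_{F_\lambda M}$.

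I do not anticipate a serious obstacle. The only point that genuinely requires care is the coproduct-versus-product distinction addressed in the second paragraph; without that remark one might worry about a morphism $\phi$ whose components $\phi_g$ are all zero yet which is nonzero, but this cannot happen in a direct sum. Everything else is a direct unwinding of the adjunction and of the hypotheses.
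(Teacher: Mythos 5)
Your proof is correct and follows essentially the same route as the paper: both arguments pass through the adjunction isomorphism $\End_A(F_\lambda M) \cong \Hom_B(M, \bigoplus_{g\in G}M^g)$, use the hypothesis to kill every component onto $M^g$ with $g\neq 1$, and conclude via $\End_B(M)=k$. Your extra remarks on the coproduct-versus-product issue and on the unit of the adjunction are sound elaborations of steps the paper leaves implicit.
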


\begin{proof}
    As shown above, we have 
    \[
\Hom_A(F_\lambda M, F_\lambda M) \cong \Hom_B(M, \oplus_{g \in G}M^g).
\]
Let $f: M \to \oplus_{g \in G}M^g$ be a non-zero morphism. For $1 \ne h \in G$, the projection $p_h:\oplus_{g \in G}M^g \to M^h$ gives $p_hf=0$ by assumption. This shows that the image of $f$ is entirely contained in the summand $M^1$. Hence, $\End_A(F_\lambda M) \cong \End_B(M)=k$.
\end{proof}

This result will be useful for studying infinite-dimensional string modules over a gentle algebra $A$. 

 \begin{prop} \label{infinite-dimensional bricks and graph maps}
     Let $A$ be a gentle algebra and $w\in \Str(A)$. The string module $M(w)$ is a brick if and only if the only  graph map from $M(w)$ to itself is the identity.
 \end{prop}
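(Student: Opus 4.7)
The forward direction is essentially formal: if $M(w)$ is a brick then $\End_A(M(w)) = k\cdot\id_{M(w)}$, and since $\id_{M(w)}$ is itself the graph map arising from the choice $s=s'=w$ (with trivial envelope), every graph map is a nonzero scalar of $\id_{M(w)}$. The image of a graph map $f_s$ is a copy of $M(s)$ inside $M(w)$; if $f_s = c\cdot \id_{M(w)}$ with $c\neq 0$ then $f_s$ is surjective, which forces $s = w$ and hence $f_s = \id_{M(w)}$ up to scalar.

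For the converse in the finite case this is immediate from Theorem~\ref{Thm-graph-maps}: when $w$ is finite, the graph maps form a basis of $\End_A(M(w))$, so the hypothesis reads $\End_A(M(w)) = k\cdot\id_{M(w)}$ and $M(w)$ is a brick. The substantial case is therefore $w$ infinite, and here the plan is to use the universal Galois cover $F\colon B = k\Gamma/J \to A$ with Galois group $G = \pi_1(Q,x_0)$ recalled just above. Fixing a lift $\widetilde w$ of $w$ to a reduced walk in the tree $\Gamma$, one has $M(w) = F_\lambda M(\widetilde w)$, and the proposition immediately preceding our statement reduces everything to checking
\begin{enumerate}
\item[(i)] $\End_B(M(\widetilde w)) = k$, and
\item[(ii)] $\Hom_B(M(\widetilde w), M(\widetilde w)^g) = 0$ for every $g\neq 1$ in $G$.
\end{enumerate}
The key combinatorial inputs are that $\Gamma$ is a tree (a reduced walk traverses each arrow at most once, so every finite substring of $\widetilde w$ has a unique occurrence) and that $G$ acts freely on $\Gamma_0$ (so $\widetilde w$ and $g\cdot\widetilde w$ have disjoint vertex sets whenever $g\neq 1$). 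Together these preclude any nontrivial finite graph map inside $\End_B(M(\widetilde w))$ and any finite graph map contributing to $\Hom_B(M(\widetilde w), M(\widetilde w)^g)$ for $g\neq 1$.

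The main obstacle is to upgrade this combinatorial non-existence of graph maps to the honest vanishing statements (i) and (ii); that is, to argue that nothing outside the span of graph maps contributes to these $\Hom$ spaces. The natural route is to exploit the push-down functor: any hypothetical nonzero element of $\End_B(M(\widetilde w))$ or $\Hom_B(M(\widetilde w), M(\widetilde w)^g)$ would push forward under $F_\lambda$ (which is faithful on locally finite-dimensional modules) to a nonzero element of $\End_A(M(w))$, which by hypothesis must be a scalar multiple of $\id_{M(w)}$. A finite graph map in $B$ pushes down to a finite graph map in $A$, and since $w$ is infinite, no finite graph map can equal a nonzero scalar of $\id_{M(w)}$ (the images have different dimensions), giving the required contradiction. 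Granting a suitable extension of Theorem~\ref{Thm-graph-maps} to the tree algebra $B$ so that $\Hom$ spaces between (possibly infinite-dimensional) string modules are indeed exhausted by graph maps (a particularly tractable situation in the tree setting, cf.\ \cite{C-B1}), conditions (i) and (ii) follow, and the preceding proposition then yields $\End_A(M(w)) = k$, completing the proof that $M(w)$ is a brick.
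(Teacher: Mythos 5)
Your overall framework---lifting $w$ to a string $\widetilde w$ over the universal Galois cover $B=k\Gamma/J$, using thinness of $M(\widetilde w)$, and invoking the adjunction $\Hom_A(F_\lambda M,F_\lambda M)\cong\Hom_B(M,\oplus_{g\in G}M^g)$---is exactly the paper's, but the execution of the converse direction contains a genuine error at the crucial step. You claim that freeness of the $G$-action on $\Gamma_0$ implies that $\widetilde w$ and $g\cdot\widetilde w$ have disjoint vertex sets for $g\neq 1$, and deduce that $\Hom_B(M(\widetilde w),M(\widetilde w)^g)=0$ for all $g\neq 1$ \emph{unconditionally}. Freeness only says $gv\neq v$ for each individual vertex $v$; it says nothing about the supports of the two walks being disjoint. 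For the Kronecker quiver with $w=\alpha\beta^{-1}\alpha\beta^{-1}\cdots$ (the example in the paper's own remark in Section 3), $\Gamma$ is a line, $\widetilde w$ is a ray, and $g\widetilde w$ is a subray of it, so $\Hom_B(M(\widetilde w),M(\widetilde w)^g)\neq 0$---consistent with $M(w)$ failing to be a brick there. Indeed, if your claim (ii) held unconditionally, the proposition would assert that \emph{every} string module is a brick, which is absurd. The actual content of the argument is the equivalence: $\Hom_B(M(\widetilde w),M(\widetilde w)^g)\neq 0$ for some $g\neq 1$ if and only if there is a graph map $M(\widetilde w)\to M(\widetilde w)^g$, which pushes down to a non-identity graph map $M(w)\to M(w)$. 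The hypothesis that the identity is the only graph map is what must be invoked to kill these Hom spaces, and your proof never uses it correctly at this point: the later appeal to it (``push forward to a nonzero endomorphism, which by hypothesis is a scalar multiple of the identity'') is circular, since the hypothesis concerns graph maps, not arbitrary endomorphisms.

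Two smaller issues. In the forward direction you assert $\End_A(M(w))=k\cdot\id$ for a brick; for infinite-dimensional modules being a brick only gives that $\End_A(M(w))$ is a division algebra, so this needs an argument. More importantly, ``$f_s$ surjective forces $s=w$, hence $f_s=\id$'' overlooks the graph maps identifying a double-infinite periodic $w$ with a translate of itself: these have $s=w$, are invertible, and are not the identity. They must be excluded separately, as the paper does via Remark \ref{automorphisms as graph maps}, by noting that in that case $\End_A(M(w))\cong k[x,x^{-1}]$ is not a division ring, so $M(w)$ was not a brick to begin with.
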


 \begin{proof}
     For $w\in\Str(A)$, the string module $M(w)$ in $\Modu A$ lies in the essential image of the push-down functor $F_\lambda$. Indeed, the string $w$ can be lifted to a string $\hat{w}$ in $(\Gamma, J)$. From the construction of a string module, it follows that $F_\lambda M(\hat{w}) \cong M(w)$. 
     Moreover, $M(\hat{w})$ is such that its dimension at any given vertex of $\Gamma$ is zero or one; such $M(\hat{w})$ is often called a thin module. Hence, we evidently have $\End_B(M(\hat{w})) = k$.  Now, we observe that $\Hom_B(M(\hat{w}), M(\hat{w})^g) \ne 0$ if and only if there is a graph map from $M(\hat{w})$ to $M(\hat{w}^g)$. This gives that $\End_A(M(w))=k$ if and only if for each $g \ne 1$, we have $\Hom(M(\hat{w}), M(\hat{w})^g)=0$. This is equivalent to stating that there is no graph map, other than the identity graph map, from $M(w)$ to $M(w)$. 
     Finally, recall from Remark \ref{automorphisms as graph maps} that if we have a non-identity graph map that is invertible, then $w$ is double-infinite and periodic. In this case, we have that $\End(M(w)) = k[x, x^{-1}]$ and in particular, $M(w)$ is not a brick.
 \end{proof}

\medskip

\section{Double-Kronecker algebra}
\label{section: double-Kronecker algebra}
In this section, we consider the \emph{double-Kronecker} quiver with a fixed labeling of the arrows, given as follows
\[Q = \xymatrix{1 \ar@/^/[r]^{\alpha_1}\ar@/_/[r]_{\alpha_2}& 2 \ar@/^/[r]^{\beta_1}\ar@/_/_{\beta_2}[r] & 3}\]
Observe that, up to relabeling the arrows of $Q$, there is a unique ideal $I \subseteq \rad^2(kQ)$ such that $kQ/I$ is a gentle algebra. Throughout this section, unless specified otherwise, we let $A = kQ/I$ denote the gentle algebra induced by the double-Kronecker quiver $Q$, where $I = \langle \alpha_1\beta_1, \alpha_2\beta_2 \rangle.$
Moreover, we consider two cyclic strings in $\Str(A)$, given by $a = \alpha_1^{-1}\alpha_2$ and $b = \beta_1\beta_2^{-1}$. 

\medskip

Let $\Str(a,b)$ denote the subset of $\Str(A)$ consisting of all strings that can be written as a word in $a$ and $b$. More specifically, each $w \in \Str(a,b)$ is a binary word in $a$ and $b$ (and no copies of $a^{-1}$ and $b^{-1}$). 
We note that every finite string in $\Str(a,b)$ starts and ends at $2$. In particular, the trivial string at vertex $2$ is considered to be an element of $\Str(a,b)$.

\begin{remark} With the same notation as above, we make the following remarks.
\begin{enumerate}
    \item In this section, we will not distinguish between elements of $\Str(a,b)$ and binary words in $a,b$. On the other hand, we will also need to consider some strings that are not necessarily in $\Str(a,b)$. Observe that an arbitrary substring of an element of $\Str(a,b)$ is not necessarily in $\Str(a,b)$.
    
    \item Let $w \in \Str(a,b)$. If $x$ is a substring of $w$ of positive length, note that $x^{-1}$ is not a substring of $w$.
\end{enumerate}
\end{remark}

We now define the analogue of the notion of envelope from Definition \ref{Def: general envelope} but using the alphabet $\{a,b\}$ specified above. In particular, for $w,s \in \Str(a,b)$, if $s$ is a substring of $w$, the following notion of $a,b$-envelope of $s$ in $w$ produces a (possibly longer) substring of $w$ that contains $s$ and again belongs to $\Str(a,b)$.

\begin{definition}\label{Def: a-b envelope}
    Let $s, w \in \Str(a,b)$ with $s$ a substring of $w$.
    \begin{enumerate}
        \item Given an occurrence of $s$ as a substring of $w$, we define the \emph{$a,b$-envelope} ${\rm env}_w^{a,b}(s)$ of $s$ in $w$ to be the unique substring $s'$ of $w$ containing $s$ with the property that $s'$ is obtained from $s$ by appending one letter from $\{a,b\}$ to the left of $s$ whenever $s$ is not a left end of $w$; and appending one letter from $\{a,b\}$ to the right of $s$ whenever $s$ is not a right end of $w$.

        \item We define ${\rm Env}_w^{a,b}(s)$ to be the set of all $a,b$-envelopes ${\rm env}_w^{a,b}(s')$ where $s'$ runs through all possible occurrences of $s$ as a substring in $w$.
    \end{enumerate}
\end{definition}

\medskip

The next lemma follows from Lemma \ref{Lemma envelopes}.

\begin{lemma}
    \label{Strong Inner Brick}
    Let $w$ be a string in $\Str(a,b)$. Then $M(w)$ is a strong inner brick if and only if there is no finite $x \in \Str(a,b)$ such that ${\rm Env}_w^{a,b}(x) \cap \{axa, ax, xa\} \ne \emptyset$ and ${\rm Env}_w^{a,b}(x) \cap \{bxb, bx, xb\} \ne \emptyset$.

\end{lemma}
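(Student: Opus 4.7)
The plan is to translate the characterization of Lemma \ref{Lemma envelopes} into the $a,b$-alphabet via a short dictionary, and then verify both directions of the equivalence. The dictionary arises from reading the arrows at the boundary of a substring $x \in \Str(a,b)$ sitting inside $w$: since $a = \alpha_1^{-1}\alpha_2$ begins with an inverse arrow and ends with a direct arrow, while $b = \beta_1\beta_2^{-1}$ begins direct and ends inverse, the arrow immediately to the right of $x$ in $w$ is $\alpha_1^{-1}$ (inverse) if the next letter is $a$ and $\beta_1$ (direct) if it is $b$; symmetrically, the arrow immediately to the left of $x$ is $\alpha_2$ (direct) if the preceding letter is $a$ and $\beta_2^{-1}$ (inverse) if it is $b$. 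Combining this with Lemma \ref{Lemma envelopes} yields the key dictionary: for $x \in \Str(a,b)$ a substring of $w$, the envelope ${\rm env}_w(x)$ is a submodule envelope (respectively, a quotient envelope) precisely when ${\rm env}_w^{a,b}(x) \in \{axa, ax, xa\}$ (respectively, $\{bxb, bx, xb\}$), where the one-letter cases correspond to $x$ occurring at an end of $w$.

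The direction $(\Leftarrow)$ is then immediate: the hypothesis supplies two occurrences of $x$ in $w$, one with a submodule envelope and one with a quotient envelope. The associated finite graph map $f_x \in \End_A(M(w))$ is nonzero and differs from the identity (indeed $x \ne w$, since any element of the prescribed sets has strictly larger $a,b$-length than $x$), so $M(w)$ is not a strong inner brick.

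For the direction $(\Rightarrow)$, suppose $M(w)$ is not a strong inner brick, so $\End_A(M(w))$ contains a non-trivial finite graph map. By Theorem \ref{Thm-graph-maps} this corresponds to a finite substring $s \ne w$ appearing in $w$ with a submodule envelope at one occurrence and a quotient envelope at another. Moreover, both occurrences must be in the same orientation, because the arrow set $\{\alpha_1^{-1}, \alpha_2, \beta_1, \beta_2^{-1}\}$ appearing in $w$ is disjoint from the set of formal inverses of these arrows, so $s^{-1}$ is never a substring of $w$ when $s$ has positive length. The decisive step is to show that $s \in \Str(a,b)$. If $s$ has positive length, the first arrow of $s$ is forced to be $\alpha_1^{-1}$ or $\beta_1$: were it instead $\alpha_2$ or $\beta_2^{-1}$, then at every occurrence $s$ would start strictly inside an $a$- or $b$-letter, so the preceding arrow in $w$ would be of a single fixed type and would preclude one of the two required envelope types on the left; and $s$ cannot start at the left end of $w$ either, since $w$ itself begins with $\alpha_1^{-1}$ or $\beta_1$. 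A symmetric argument forces the last arrow of $s$ to be $\alpha_2$ or $\beta_2^{-1}$, so $s$ begins and ends at vertex $2$ at letter boundaries and therefore lies in $\Str(a,b)$. The trivial case $s = e_v$ is handled directly: at vertex $1$ or $3$ the preceding and succeeding arrows are rigidly determined and only one envelope type is possible, so necessarily $s = e_2 \in \Str(a,b)$. Applying the dictionary of the first paragraph in reverse to the two occurrences of $s$ then recovers the two intersection conditions with $x := s$, completing the proof. The main obstacle is precisely the first-arrow/last-arrow analysis forcing $s$ to respect letter boundaries; everything else is a routine translation through the dictionary.
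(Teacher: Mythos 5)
Your argument is correct and takes essentially the same route as the paper's: your dictionary between ordinary envelopes and $a,b$-envelopes is exactly the paper's correspondence table, and your first-arrow/last-arrow analysis forcing $s$ to respect letter boundaries (hence to start and end at vertex $2$ and lie in $\Str(a,b)$) is the same content as the paper's tabulation of the start/end vertices forced by quotient versus submodule envelopes, where only the pair $(2,2)$ occurs in both lists. No gaps; the treatment of the trivial substrings $e_1$, $e_3$ and of orientation (ruling out $s^{-1}$) matches the paper's implicit reasoning.
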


\begin{proof}
Given an arbitrary finite substring $x$ of $w$ with $x \ne w$, we may list all possible quotient envelopes of $x$ in $w$ and all possible submodule envelopes of $x$ in $w$.

\medskip
\noindent % Prevents indentation for the first minipage
\begin{minipage}[t]{0.48\textwidth} % [t] aligns them at the top. 0.48\textwidth leaves space for separation.
    \centering % Centers the table within the minipage
    \begin{tabular}{|c|c|c|}
    \hline
    Quotient $ {\rm env}_w(x)$ & $s(x)$ & $t(x)$\\
    \hline
    $\alpha_1^{-1} x \alpha_2$ & 1 & 1 \\
    $\alpha_1^{-1} x \beta_1 $ & 1 & 2 \\
    $\beta_2^{-1} x \alpha_2 $ & 2 & 1 \\
    $\beta_2^{-1} x \beta_1 $ & 2 & 2 \\
    $ x \alpha_2$ & 2 & 1 \\
    $ x \beta_1 $ & 2 & 2 \\
    $\alpha_1^{-1} x $ & 1 & 2 \\
    $\beta_2^{-1} x $ & 2 & 2 \\
    \hline
    \end{tabular}
    % \caption{First Table} % Uncomment if you want a caption for each table
\end{minipage}
\hfill % Adds flexible horizontal space between the two minipages
\begin{minipage}[t]{0.48\textwidth}
    \centering
    \begin{tabular}{|c|c|c|}
    \hline
    Submodule ${\rm env}_w(x)$ & $s(x)$ & $t(x)$\\
    \hline
    $\alpha_2 x \alpha_1^{-1} $ & 2 & 2 \\
    $\beta_1 x \alpha_1^{-1}$ & 3 & 2 \\
    $\alpha_2 x \beta_2^{-1}$ & 2 & 3 \\
    $\beta_1 x \beta_2^{-1} $ & 3 & 3 \\
    $ x \alpha_1^{-1}$ & 2 & 2 \\
    $ x \beta_2^{-1} $ & 2 & 3 \\
    $\alpha_2 x $ & 2 & 2 \\
    $\beta_1 x $ & 3 & 2 \\
    \hline
    \end{tabular}
    % \caption{Second Table} % Uncomment if you want a caption for each table
\end{minipage}

\medskip

Observe that among the pairs $(s(x), t(x))$ from the above tables, only $(2,2)$ appears in both tables. Therefore, the only substrings of $w$ that can have both submodule and quotient envelopes must start and end at vertex 2. Hence, such a finite substring $x$ has to be in $\Str(a,b)$. Observe now that it then follows from the construction of $a$ and $b$ that ${\rm env}_w(x)$ uniquely determines ${\rm env}^{a,b}_w(x)$. The following table shows this correspondence, where the quotient envelopes are shown first, followed by the submodule envelopes.

\medskip

{
\centering
    \begin{tabular}{|c|c|}
    \hline
    ${\rm env}_w(x)$ & ${\rm env}^{a,b}_w(x)$\\
    \hline
    $\beta_2^{-1} x \beta_1 $ & $bxb$  \\
    $ x \beta_1 $ & $xb$  \\
    $\beta_2^{-1} x $ & $bx$  \\
     &   \\
    $\alpha_2 x \alpha_1^{-1} $ & axa  \\
    $x \alpha_1^{-1} $ & xa  \\
    $\alpha_2 x $ & ax  \\
    \hline
    \end{tabular}
    
    }

\medskip
Therefore, there exists no such $x$ with ${\rm Env}_w^{a,b}(x)$ including one of $\{bxb,xb,bx\}$ and one of $\{axa,xa,ax\}$ if and only if $w$ is a strong inner brick. 
\end{proof}

\begin{remark}\label{Inner Brick}
Note that when considering inner-bricks, those submodule substrings and also the quotient substrings which are at the extremities of $w$ are not considered. Thus, $M(w)$ is an inner brick if and only if there is no finite $x \in \Str(a,b)$ such that $axa \in {\rm Env}_w^{a,b}(x)$ and $bxb \in {\rm Env}_w^{a,b}(x)$. 
\end{remark}

The following propositions establish useful connections between some strings of the double-Kronecker quiver and some Sturmian words introduced in Section \ref{Section: binary words}.

\begin{prop}
    \label{Inner Brick Sturmian}
    Let $w$ be an aperiodic infinite string in $\Str(a,b)$. Then $M(w)$ is an inner brick if and only if it is a Sturmian word in $a$'s and $b$'s.
\end{prop}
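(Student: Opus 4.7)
The plan is to combine Remark \ref{Inner Brick} with the Sturmian characterization in Proposition \ref{Prop: For aperiodic, Sturmian is axa, bxb avoiding}, via a short bridge observation identifying envelope-membership with subword-containment for patterns of the form $axa$ and $bxb$.

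First, I would invoke Remark \ref{Inner Brick} to rephrase the inner-brick condition: $M(w)$ is an inner brick if and only if there is no finite $x \in \Str(a,b)$ such that both $axa \in {\rm Env}_w^{a,b}(x)$ and $bxb \in {\rm Env}_w^{a,b}(x)$.

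Next, I would establish the following bridge: for any finite $x \in \Str(a,b)$, one has $axa \in {\rm Env}_w^{a,b}(x)$ if and only if $axa$ is a subword of $w$, and similarly for $bxb$. The forward direction is immediate from Definition \ref{Def: a-b envelope}. For the converse, if $axa$ occurs as a subword of $w$, then the copy of $x$ sitting between the two $a$'s has the letter $a$ immediately on either side, hence is neither a left end nor a right end of $w$; its $a,b$-envelope therefore appends one letter on each side, producing exactly $axa$. The degenerate case where $x$ is the trivial string $e_2$ works identically, since $a$ and $b$ are cyclic strings at vertex $2$, so the point between two consecutive $a$'s is precisely an occurrence of $e_2$.

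Combining the two steps, $M(w)$ is an inner brick if and only if there is no finite $x \in \Str(a,b)$ with both $axa$ and $bxb$ subwords of $w$. Since $w$ itself lies in $\Str(a,b)$, the finite subwords of $w$ in the alphabet $\{a,b\}$ coincide with the finite elements of $\Str(a,b)$ that appear as substrings of $w$. An application of Proposition \ref{Prop: For aperiodic, Sturmian is axa, bxb avoiding}, whose hypothesis of aperiodicity is given, then converts this to the statement that $w$ is Sturmian.

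The essential content is the short bridge observation, and there is no serious obstacle: the point is simply that patterns of the form $axa$ and $bxb$ automatically force the enclosed $x$ to be interior to $w$, which is exactly what the $a,b$-envelope construction needs in order to extend by one letter on each side. The aperiodicity hypothesis enters only through the invocation of Proposition \ref{Prop: For aperiodic, Sturmian is axa, bxb avoiding}; without it, the $axa$/$bxb$-avoidance property no longer pins down the Sturmian class (cf.\ the periodic double-infinite word $\cdots aaabbb\cdots$ noted earlier).
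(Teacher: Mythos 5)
Your proof is correct and follows essentially the same route as the paper, which simply cites Remark \ref{Inner Brick} together with Proposition \ref{Prop: For aperiodic, Sturmian is axa, bxb avoiding}; your ``bridge observation'' identifying $axa \in {\rm Env}_w^{a,b}(x)$ with $axa$ being a subword of $w$ is exactly the step the paper leaves implicit, and you verify it correctly (including the trivial-string case).
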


\begin{proof}
    This follows from Remark \ref{Inner Brick} and Proposition \ref{Prop: For aperiodic, Sturmian is axa, bxb avoiding}.
\end{proof}

\begin{prop} \label{Lemma sturmian inner-bricks}

    An aperiodic double-infinite string in $\Str(A)$ is a strong inner brick if and only if it is a Sturmian word in $\Str(a,b)$.
\end{prop}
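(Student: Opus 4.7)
The plan is to reduce to Proposition \ref{Inner Brick Sturmian} by showing that, up to passing from $w$ to $w^{-1}$, every double-infinite string $w \in \Str(A)$ actually belongs to $\Str(a,b)$. This reduction is legitimate because $M(w) \cong M(w^{-1})$ as $A$-modules, so being a strong inner brick is invariant under inversion, and reversing a binary word preserves both balance and aperiodicity, hence the property of being Sturmian.

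The key step is the reduction itself. First, $w$ must pass through vertex $2$ infinitely often in both directions: at vertex $1$ the only letters in $Q_1 \sqcup Q_1^{-1}$ starting there are $\alpha_1, \alpha_2$ (both ending at $2$), and the only letters ending at $1$ are $\alpha_1^{-1}, \alpha_2^{-1}$ (both starting at $2$); the same holds dually at vertex $3$. Hence $w$ decomposes into consecutive length-two blocks from $2$ to $2$, and excluding the forbidden cancellations $\gamma\gamma^{-1}$, each block lies in $\{a, a^{-1}, b, b^{-1}\}$. A short case analysis using the relations $\alpha_1\beta_1, \alpha_2\beta_2 \in I$ then rules out every mixed junction: for example $a \cdot a^{-1}$ contains the cancellation $\alpha_2\alpha_2^{-1}$, while $a\cdot b^{-1}$ contains the relation $\alpha_2\beta_2$; the remaining mixed junctions are handled symmetrically. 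Therefore the blocks of $w$ are either all in $\{a,b\}$, in which case $w \in \Str(a,b)$, or all in $\{a^{-1}, b^{-1}\}$, in which case $w^{-1} \in \Str(a,b)$.

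Assuming now that $w \in \Str(a,b)$, the remark following the definition of (strong) inner brick notes that the two notions coincide for double-infinite strings, so $M(w)$ is a strong inner brick if and only if $M(w)$ is an inner brick. Since $w$ is aperiodic and infinite, Proposition \ref{Inner Brick Sturmian} then identifies this with $w$ being a Sturmian word in $\Str(a,b)$, finishing the argument. The main obstacle is precisely the reduction in the second paragraph: one must verify that no double-infinite string over $A$ escapes the positive or negative regime through some exotic transition, which requires enumerating the admissible concatenations of $a^{\pm 1}, b^{\pm 1}$ against both the string axioms (P1), (P2) and the defining relations of $A$, and also ruling out the pathological case where $w$ might not visit vertex $2$ cofinally in one direction.
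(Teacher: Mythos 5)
Your proof is correct and its core is the same as the paper's own one-line argument: for a double-infinite string the notions of strong inner brick and inner brick coincide (the remark following the definition), and Proposition~\ref{Inner Brick Sturmian} then converts ``inner brick'' into ``Sturmian''. The genuine addition in your write-up is the reduction showing that every double-infinite string in $\Str(A)$ decomposes into length-two blocks through vertex $2$, each block lying in $\{a,a^{-1},b,b^{-1}\}$, and that the admissible junctions (your checks are accurate: $aa^{-1}$ violates (P1), $ab^{-1}$ contains $\alpha_2\beta_2\in I$, $a^{-1}b$ contains $\alpha_1\beta_1\in I$, and so on) force all blocks into $\{a,b\}$ or all into $\{a^{-1},b^{-1}\}$; the paper's proof silently treats the string as already lying in $\Str(a,b)$ even though the statement only assumes membership in $\Str(A)$. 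Your appeal to the invariance of both ``strong inner brick'' (via $M(w)\cong M(w^{-1})$) and ``Sturmian'' (balance and aperiodicity are preserved under transposition) to justify passing to $w^{-1}$ is also the right way to reconcile the literal wording of the statement with the case where the blocks are $a^{-1},b^{-1}$, so if anything your argument is more complete than the published one.
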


    \begin{proof} For a double-infinite string, being a strong-inner brick is equivalent to being an inner brick, therefore this follows from Proposition \ref{Inner Brick Sturmian}
    \end{proof}

    \begin{lemma}
    A right-infinite aperiodic string in $\Str(a,b)$ is a strong inner brick if and only if it is a characteristic Sturmian word.
    \label{le:characteristicbrick}
    
    \begin{proof}
    A right-infinite string $s$ in $\Str(a,b)$ will be a strong inner brick if and only if $s$, $as$, and $bs$ are inner bricks. Therefore, $s$ is a strong inner brick if and only if $s$, $as$, and $bs$ are Sturmian words. By Proposition \ref{sturmian with lattice point}, this is equivalent to $s$ being a characteristic Sturmian word. 
    \end{proof}
\end{lemma}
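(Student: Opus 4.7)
The plan is to reduce the strong inner brick condition on the right-infinite string $s$ to an inner brick condition on its two one-letter extensions $as$ and $bs$, and then invoke the Sturmian word characterisations already established.

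First, I would specialise Lemma \ref{Strong Inner Brick} to the right-infinite setting. Since $s$ is right-infinite, no substring $x \in \Str(a,b)$ can occur at the right extremity, so the envelope sets $\{axa,ax,xa\}$ and $\{bxb,bx,xb\}$ reduce to $\{axa,xa\}$ and $\{bxb,xb\}$ respectively. Hence $M(s)$ fails to be a strong inner brick exactly when there is a finite $x \in \Str(a,b)$ with one occurrence producing an envelope in $\{axa,xa\}$ and another (possibly the same) producing an envelope in $\{bxb,xb\}$.

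Next, I would prove the key equivalence: $M(s)$ is a strong inner brick if and only if $s$, $as$, and $bs$ are all inner bricks. The central observation is that an extremity envelope $xa$ of $x$ in $s$ (i.e.\ $x$ is a starting subword of $s$ followed by $a$) becomes the middle envelope $axa$ of the same occurrence of $x$ inside $as$, and likewise for $b$. So any finite $x$ witnessing non-strong-inner-brickness of $s$ yields $axa$ as a middle subword of $as$ and $bxb$ as a middle subword of $bs$ (after possibly passing to $as$ or $bs$ to convert an extremity envelope into a middle one); conversely, such witnesses in $s$, $as$, or $bs$ immediately produce a finite $x$ violating the strong inner brick condition for $s$. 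Here I would remark that the ``both envelopes at the left extremity'' case is vacuous, because the starting substring of a given length in $s$ is unique, so it is followed by either $a$ or $b$ but not both. I expect the careful bookkeeping of these cases to be the main (though quite manageable) obstacle; the case analysis mirrors the tables in the proof of Lemma \ref{Strong Inner Brick}.

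Then I would apply Proposition \ref{Inner Brick Sturmian} to each of $s$, $as$, $bs$. Since $s$ is aperiodic and the aperiodicity of a right-infinite word is preserved under prepending a single letter, $as$ and $bs$ are also aperiodic right-infinite strings in $\Str(a,b)$. Consequently, $s$, $as$, $bs$ are all inner bricks if and only if all three are Sturmian words.

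Finally, Proposition \ref{sturmian with lattice point} says that a right-infinite Sturmian word $s$ with both $as$ and $bs$ Sturmian is precisely a characteristic Sturmian word $\sturmian{m}{0}{(0,\infty)}$ for $m$ irrational. Chaining the three equivalences yields the desired statement. The first and fourth steps are essentially bookkeeping and an appeal to a stated proposition; the substantive content lives in the second step.
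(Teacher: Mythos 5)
Your proposal is correct and follows essentially the same route as the paper: reduce strong inner brickness of $s$ to inner brickness of $s$, $as$, and $bs$, convert to Sturmian-ness via Proposition \ref{Inner Brick Sturmian}, and conclude with Proposition \ref{sturmian with lattice point}. The paper asserts the first equivalence without justification, whereas you supply the envelope-conversion argument (including the observation that the two extremity envelopes cannot coexist), so your write-up is a more detailed version of the same proof.
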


    In the next lemma, we give the necessary and sufficient condition for some infinite elements of $\Str(a,b)$ that induce bricks.

\begin{lemma}
    Let $w \in \Str(a,b)$ be infinite and let $M(w)$ be a strong inner-brick. Then $M(w)$ is a brick if and only if $w$ is aperiodic.

    \label{le:aperiodicbrick}
\end{lemma}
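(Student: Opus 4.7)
The plan is to use Proposition \ref{infinite-dimensional bricks and graph maps} to reduce the statement to the claim that the only graph map $M(w)\to M(w)$ is the identity if and only if $w$ is aperiodic. Since $M(w)$ is assumed to be a strong inner-brick, the argument does not need to worry about \emph{finite} graph maps, and will focus entirely on graph maps coming from \emph{infinite} substrings $s$ of $w$. A preliminary remark will simplify this considerably: every arrow appearing in any $w\in\Str(a,b)$ lies in $\{\alpha_1^{-1},\alpha_2,\beta_1,\beta_2^{-1}\}$, while the inverse $s^{-1}$ of any positive-length substring uses only the complementary arrows $\{\alpha_1,\alpha_2^{-1},\beta_1^{-1},\beta_2\}$. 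Hence $s^{-1}$ will never be a substring of $w$, so any non-identity graph map must come from two (possibly coinciding) occurrences of the same $s$, one with a quotient envelope and one with a submodule envelope.

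For the direction ($\Leftarrow$), I will first prove that, under the aperiodicity assumption, every infinite substring of $w$ has a unique occurrence in $w$: two coinciding right-tails would witness eventual right-periodicity, two coinciding left-tails would witness eventual left-periodicity, and $w$ itself occurs only once. With uniqueness in hand, a non-identity graph map $f_s$ would force that single occurrence to be simultaneously a quotient and a submodule envelope, which by inspection happens only when the prepended $\mu$ and appended $\nu$ are both trivial, i.e., $s=w$, yielding only the identity map. Combined with the strong inner-brick hypothesis, this will show that $M(w)$ is a brick.

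For ($\Rightarrow$) I will argue the contrapositive and construct a non-identity graph map whenever $w$ is not aperiodic. The double-infinite periodic case will be dispatched immediately by invoking Remark \ref{automorphisms as graph maps}, which produces an invertible translation graph map distinct from the identity. For the remaining cases, up to the obvious left-right symmetry, $w$ is eventually right-periodic, so I will write $w=w_1w_2$ with $w_2$ right-infinite and binary-periodic of minimal period $\sigma=\sigma_1\cdots\sigma_{|\sigma|}$, and let $j_0$ be the smallest binary-letter index from which $w$ is periodic of period $|\sigma|$. The substring to consider will be the right-tail $s$ of $w$ starting at arrow position $2j_0-1$, whose occurrences within $w$ lie at the positions $2j_0-1+2k|\sigma|$ for $k\geq 0$. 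The plan is then to compare the arrows $\mu$ prepended to $s$ at the occurrences $k=0$ and $k=1$ in order to assemble the desired graph map.

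The hard part will be confirming that these two arrows $\mu$ are of \emph{opposite type} --- one direct, one inverse --- so that one envelope becomes a submodule envelope and the other a quotient envelope. The terminal arrow of $a$ is $\alpha_2$ (direct), while the terminal arrow of $b$ is $\beta_2^{-1}$ (inverse), so when $w_1$ is non-empty the required discrepancy reduces to showing that the binary letter at position $j_0-1$ differs from $\sigma_{|\sigma|}$. This is precisely what minimality of $j_0$ secures: if these two binary letters agreed, then the tail starting at position $j_0-1$ would itself be periodic of period $|\sigma|$ (with a cyclic rotation of $\sigma$ as period), contradicting the choice of $j_0$. When $w_1$ is empty, $w$ is fully right-infinite periodic and the first $\mu$ is trivial --- hence both quotient and submodule --- so the later $\mu$ supplies the opposing type. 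Either way, the construction yields a non-identity graph map $f_s\in\End_A(M(w))$, completing the proof.
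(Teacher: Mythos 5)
Your proposal is correct and follows essentially the same route as the paper: both reduce via Proposition \ref{infinite-dimensional bricks and graph maps} to the (non-)existence of a non-identity graph map, observe that $s^{-1}$ is never a substring of $w$ so such a map forces a repeated infinite tail and hence eventual periodicity, and conversely build the non-invertible graph map by arranging the periodic tail to be preceded by an $a$ in one occurrence and a $b$ in another. Your minimality-of-$j_0$ argument is exactly the paper's choice of a decomposition $w=w'sss\cdots$ in which the last letter of $w'$ differs from the last letter of $s$, with your explicit treatment of the one-sided fully periodic case filling in a step the paper leaves implicit.
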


\begin{proof}
Let $M(w)$ be a strong inner brick, meaning that there are no finite dimensional graph maps between $M(w)$ and itself. It follows from Proposition \ref{infinite-dimensional bricks and graph maps} that $M(w)$ is a brick if and only if there is no infinite-dimensional graph maps between $M(w)$ and itself, other than the identity. 
For an infinite substring $s$ of $w$, note that $s^{-1}$ cannot be a substring of $w$. Hence, the existence of an infinite-dimensional graph map different from the identity is equivalent to the existence of an infinite substring $s$ of $w$ with at least two occurrences within $w$. In all cases, this is equivalent to $w$ being eventually periodic or periodic. If $w$ is  periodic, then we have already seen that $M(w)$ is not a brick. So, we let $w$ be eventually periodic. 

Without loss of generality we can assume it is eventually right periodic. In particular, suppose $w = w'ssss\dots$. 
Note that this way of writing $w$ is not unique, and we may choose $w', s \in \Str(a,b)$ such that the last letter of $w'$ is different from the last letter of $s$. The only scenario in which this is not possible is when $w= \dots sss \dots$, which was treated above as the periodic case. Excluding that scenario, we have the infinite string $sss\dots$ preceded by an $a$ in one instance and by $b$ in another, since $w'$ ends with one of them and $s$ with the other. This results in a valid graph map that is not invertible, and implies that $M(w)$ is not a brick.
\end{proof}

\begin{prop}
Let $w\in \Str(a,b)$ be right-infinite and consider the string $\alpha_2 w$. Then $M(\alpha_2 w)$ is a brick if and only if there exists a characteristic Sturmian word $w'$ such that $w=bw'$. 
\end{prop}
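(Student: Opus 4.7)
The plan is to apply Proposition~\ref{infinite-dimensional bricks and graph maps}, which reduces $M(\alpha_2 w)$ being a brick to the statement that the only graph map from $M(\alpha_2 w)$ to itself is the identity. So I will classify the non-identity graph maps. Any such map is determined by a substring $s$ of $\alpha_2 w$ admitting a quotient envelope at some occurrence and a submodule envelope at another (distinct) occurrence. Inspecting the local arrow configurations around the three vertices of $Q$, such an $s$ must start and end at vertex $1$ or vertex $2$, since substrings touching vertex $3$ always fail one envelope. This gives two finite cases: Case~(A), where $s\in\Str(a,b)$ starts and ends at vertex $2$, and Case~(B), where $s=\alpha_2 s'$ with $s'\in\Str(a,b)$ incorporates the initial arrow of $\alpha_2 w$ (so $s$ is a submodule only at position~$1$ and a quotient only at internal copies of $\alpha_2$ inside $a$-letters of $w$). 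Additionally there is an infinite non-identity graph map precisely when $\alpha_2 w$, equivalently $w$, is eventually periodic. An envelope analysis analogous to Lemma~\ref{Strong Inner Brick}, adjusted for the boundary effect of the initial $\alpha_2$, will show that Case~(A) triggers iff some $s\in\Str(a,b)$ has $bsb$ a subword of $w$ together with either $asa$ a subword of $w$ or $sa$ a prefix of $w$; and Case~(B) triggers iff some $s'\in\Str(a,b)$ has $s'a$ a prefix of $w$ and $as'b$ a subword of $w$.

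For the converse direction, suppose $M(\alpha_2 w)$ is a brick. If $w$ started with $a$, either $w=a^\infty$ is periodic, or $w$ contains an $ab$ pattern and $s=\alpha_2$ furnishes a Case~(B) graph map (submodule at position~$1$, quotient at the internal $\alpha_2$ of the $a$ preceding $b$); either way a contradiction. So $w=bw'$. Absence of infinite graph maps forces $w$, hence $w'$, to be aperiodic, and the Case~(A) sub-case with internal $axa$ and $bxb$ forces $w$ balanced. Thus $w$ and $w'$ are Sturmian. To upgrade $w'$ to being characteristic, by Proposition~\ref{sturmian with lattice point} it suffices to prove $aw'$ is Sturmian, and as $aw'$ is aperiodic this reduces to balancedness. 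If $aw'$ were unbalanced, some $x$ would have both $axa$ and $bxb$ as subwords of $aw'$; Sturmianness of $w'$ forbids both being inside $w'$, forcing $xa$ to be a prefix of $w'$ while $bxb$ is a subword of $w'$. A length-based prefix comparison (the common prefix of length $|x|+1$ would force $bx=xa$, hence $x=b^{|x|}$ and $a=b$) excludes $bxb$ from being simultaneously a prefix of $w'$, so $bxb$ is preceded in $w'$ by $a$ or $b$, giving $abxb$ or $bbxb$ as a subword of $w'$. Translating through $w=bw'$, this yields a Case~(B) graph map (with $s'=bx$) or a Case~(A) graph map (with $s=bx$), contradicting the brick hypothesis.

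For the direct direction, assume $w=bw'$ with $w'$ characteristic Sturmian. By Proposition~\ref{sturmian with lattice point}, $w=bw'$ is Sturmian, hence aperiodic, excluding infinite graph maps. If a Case~(A) or~(B) graph map existed, the triggering substring $s$ (or $s'$) would have $sa$ a prefix of $w$ and $sb$ a subword of $w$ (extracting $sb$ from $bsb$ or from $as'b$). Since $w$ starts with $b$, $s$ is non-empty and begins with $b$, so $s=b\tilde s$ and $\tilde s a$ is a prefix of $w'$. As both $sa$ and $sb$ are subwords of $w=bw'$, Proposition~\ref{Both sa and sb} gives $s^T=\tilde s^T b$ as a prefix of the characteristic Sturmian word of the same slope as $w$, which is $w'$ itself. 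But $w'$ then has two prefixes of common length $|\tilde s|+1$, namely $\tilde s a$ and $\tilde s^T b$; they must agree, yet their final letters disagree---a contradiction. Hence $M(\alpha_2 w)$ is a brick. The main anticipated difficulty is the careful envelope bookkeeping, particularly that the boundary position of the initial $\alpha_2$ (which mimics an $a$-letter on the left without being extendable further) is treated consistently, and that Cases~(A) and~(B) jointly exhaust all non-identity graph maps.
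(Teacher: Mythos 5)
Your proof is correct, and your opening reduction (brickness of $M(\alpha_2 w)$ fails exactly when $w$ is eventually periodic, or Case~(A) triggers, or Case~(B) triggers) matches the characterization the paper isolates in the ``claim'' at the start of its own proof: your two cases are precisely the statement that $aw$ is Sturmian and that no prefix $sa$ of $w$ coexists with an occurrence of $sb$. Where you genuinely diverge is in proving that this combinatorial condition is equivalent to $w=bw'$ with $w'$ characteristic. The paper passes to the geometric realization of $aw$ as a cutting word $r_{m,c}^{D}$, uses density of the fractional parts $z_i=y(x_i)-\lfloor y(x_i)\rfloor$ in $[0,1]$ to produce infinitely many prefixes ending in $ab$, and then perturbs the line downward to manufacture an occurrence of $sb$, splitting into cases according to whether the line meets a lattice point and whether the lower or upper cutting sequence is taken. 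You avoid the geometry entirely: for the ``only if'' direction you bootstrap to Proposition~\ref{sturmian with lattice point} by showing $aw'$ is balanced (converting any violating factor into a Case~(A) or Case~(B) graph map, with the prefix comparison $bx=xa\Rightarrow a=b$ disposing of the degenerate sub-case), and for the ``if'' direction you invoke Proposition~\ref{Both sa and sb} to force $\tilde s^{T}b$ and $\tilde s a$ to be equal-length prefixes of $w'$ with different last letters. This is shorter and stays entirely inside word combinatorics; the paper's route has the advantage of keeping the ``limit of Christoffel words'' picture visible. Two small points to make explicit in a final write-up: (i) in the ``if'' direction, the sub-case of Case~(A) where $asa$ and $bsb$ are both internal subwords of $w$ is not covered by your sentence ``the triggering substring would have $sa$ a prefix of $w$''---it must be killed separately, which is immediate since it contradicts Sturmianness of $w=bw'$ via Proposition~\ref{Prop: For aperiodic, Sturmian is axa, bxb avoiding}; and (ii) you use the converse of Proposition~\ref{sturmian with lattice point} (that a characteristic $w'$ has $bw'$ Sturmian), which is standard and follows from the cutting-word description but is not literally the implication quoted in the paper.
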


\begin{proof}

We claim that $M(\alpha_2 w)$ is a brick if and only if $aw$ is Sturmian, and for any finite string $s$ with $sa$ an initial substring of $w$, any other occurrence of $s$ in $w$ is followed by an $a$. 

First, assume that $M(\alpha_2 w)$ is a brick. From Remark \ref{Inner Brick}, it follows  that $aw$ is a Sturmian word. 
   Observe that if $sa$ is a starting substring of $w$, then the initial substring $\alpha_2 s$ of $\alpha_2 w$ provides a submodule substring of $\alpha_2 w$. Assume that $sb$ occurs in $w$. If $sb$ is preceded by an $a$, then we get a quotient substring $\alpha_2 s$. If $sb$ is preceded by a copy of $b$, then we get a contradiction to $aw$ being Sturmian.  
    
    Conversely, we check that if $M(\alpha_2 w)$ is not a brick, but $aw$ is Sturmian, then there is a finite substring $s$ such that $sa$ is an initial substring of $w$ and there is an occurrence of $sb$ in $w$. Since $\alpha_2 w$ is right-infinite and non-periodic (because $aw$ is Sturmian), there is no graph map $M(\alpha_2 w) \to M(\alpha_2 w)$ with infinite-dimensional image. Let $s_1$ be a finite quotient substring and $s_2$ be a finite submodule substring of $M(\alpha_2 w)$ with $s_1 = s_2$. Note that the only way for a substring $\alpha_2 s$ to be a submodule substring is when it is an initial substring of $\alpha_2 w$. Hence, the case where $s_1$ is an initial substring of $\alpha_2 w$ is excluded. Hence, $s_1$ is a quotient substring of $w$ (and observe that it cannot use the first arrow of $w$). If $s_2$ is an initial substring of $\alpha_2 w$, then $s_2 = \alpha_2 s$ that is followed by an $a$ and $s_1$ has to be $s_1 = \alpha_2 s$ which is followed by a $b$. Hence, we get the desired result. The remaining case is when  $s_1, s_2$ are entirely contained in $w$. As we observed above, $s_1$ does not use the first arrow of $w$. Hence, $aw$ is such that there is $s$ with $asa, bsb$ in it, contradicting that $aw$ is Sturmian. This finishes the proof of the claim.

Now, let us assume that $M(\alpha_2 w)$ is a brick, so that $aw$ is Sturmian and satisfies the other property of the above claim. We know that $aw$ is given by a cutting word $r^D_{m,c}$, for some irrational positive number $m$ and some right-infinite interval $D$. If $y=mx+c$ goes through a (single) lattice point over $D$, then $aw$ is either the lower cutting sequence which associates to this lattice point a copy of $ba$, or it is the upper cutting sequence which associates to this lattice point an $ab$. We will have to distinguish between these two possible cases later. Let us consider the set
$$\mathcal{S} = \{s \mid sab \; \text{is an initial subword of}\; w\}.$$
 Such an $s$ represents an initial segment of the cutting word such that the last two cuts are horizontal and then vertical (or a cut that is a lattice point, in case we are in the upper cutting sequence case). Every such vertical cut corresponds to an integral value on the $x$-axis.
We take $D \cap \mathbb{Z} = [x_1, \infty)$ and we write $x_i = x_1 + i-1$, for $i \in \mathbb{Z}_{>0}$. For each $x_i$, we compute $z_i:= y(x_i) - \lfloor(y(x_i))\rfloor$, with $0 < z_i < 1$ except possibly for one value of $z_i$, provided the line $y=mx + c$ goes through a lattice point over $D$.

The set $\{z_1, z_2, \ldots\}$ is dense in the interval $[0,1]$. Hence, there are infinitely many $j$ such that $z_j$ is the minimum of the set $\{z_1, z_2, \ldots, z_j\} \setminus \{0\}$. Such a $j$ necessarily corresponds to a vertical crossing where the previous crossing was horizontal (if not, the previous vertical crossing would have a $z$ value strictly smaller). Hence, such a $j$ gives rise to an initial subword of $w$ of the form $sab$. 

Consider first the case where $y=mx+c$ does not go through any lattice point over $D$. If we take a $j$ as above, we see that over $D \cap (-\infty, x_j]$, translating the line $y = mx+c$ down by a value $\epsilon$ with $0 < \epsilon < z_j$ does not change the cutting sequence over  $D \cap (-\infty, x_j]$. In fact, there exists some $\epsilon_j > z_j$ such that   translating the line $y = mx+c$ down by a value $\epsilon_j$ does not change the cutting sequence over  $D \cap (-\infty, x_j]$, except that the last cuts ``horizontal, then vertical" become ``vertical". This means that we have a starting substring of the form $sab$ for $w$, and it is possible to translate the line $y=mx+c$ down by some small value so that the new line has a cutting word with initial subword given by $sb$. It follows from the irrationality of $m$ that the cutting sequence corresponding  to $sb$ will eventually occur in $w$ over $D$. But this implies that $M(\alpha_2 w)$ is not a brick, hence a contradiction.

In the next case, assume that $y=mx+c$ goes through a lattice point over $D$ and that we are in the lower cutting sequence case (so that this lattice point corresponds to a $ba$). Then the above argument still applies, because translating the line down by a small value will turn the lattice point into two crossings, "vertical, then horizontal", which is also represented by $ba$. 

The very last case is when $y=mx+c$ goes through a lattice point over $D$ and that we are in the upper cutting sequence case. In this case, if the lattice point is not the first time the line $y=mx+c$ cuts the integer grid over $D$ we have $aw = az(ab)z^Ta w'$ (due to the symmetry around the lattice crossing)
where $z$ is some (possibly empty word). It follows from Proposition \ref{Both sa and sb} that since $z^Ta$ is the start of the characteristic Sturmian word $z^Taw''$, this means that $(z^Ta)^T = az$ has the property that both $aza, azb$ appear in $aw$. As show above, this means that $M(\alpha_2 w)$ is not a brick. 

We are now left with the case $aw = (ab)w'$, where the first factor $ab$ corresponds to the lattice point, and $w'$ is the characteristic Sturmian word of slope $m$. Assume that $aw$ has a starting subword $asa$ and that $asb$ also appears in $aw$. Let us denote by $d$ the letter such that $asad$ is a starting subword of $aw$. This means that $(as)^T = s^Ta$ is a starting subword of $w'$. However, $s$ starts with $b$ and the equality $asad = abs^Ta$ implies that $s$ also starts with $a$, a contradiction. Hence, this is the only case that leads to a brick $M(\alpha_2 w)$. Since $aw = (ab)w'$ is such that $w'$ is a characteristic Sturmian word, the proof is complete.
\end{proof}

The following theorem follows from our analysis above and it gives a complete description of infinite string bricks over the $2$-Kronecker algebra. Recall that, for $w \in \Str(A)$, by $s(w)$ we denote the start of $w$.

\begin{theorem}\label{Thm: sturmian words of double-Kronecker}
    With the same notation as above, let $A=kQ/I$ be the gentle algebra of the double-Kronecker quiver $Q$. For an infinite string $w$ over $A$, we have 
    \begin{enumerate}
        \item If $s(w)=1$, then $M(w)$ is a brick if and only if either $w = \alpha_2 b w'$ with $w'$ a characteristic Sturmian word in $\Str(a,b)$ or else $w = \alpha_1 b^{-1} w'$ with $w'$ a characteristic Sturmian word in $\Str(a^{-1},b^{-1})$.
        
        \item If $s(w)=2$, then $M(w)$ is a brick if and only if $w$ is a characteristic Sturmian word in $\Str(a,b)$ or a characteristic Sturmian word in $\Str(a^{-1},b^{-1})$.
       
        \item If $s(w)=3$, then $M(w)$ is a brick if and only if either $w = \beta_2^{-1} a w'$ with $w'$ a characteristic Sturmian word in $\Str(a,b)$ or else $w = \beta_1^{-1} a^{-1} w'$ with $w'$ a characteristic Sturmian word in $\Str(a^{-1},b^{-1})$. 
        
        \item If $w$ is left-infinite, then $M(w)$ is a brick if and only if $M(w^{-1})$ is a brick. 
        
        \item If $w$ is double-infinite, then $M(w)$ is a brick if and only if $w$ or $w^{-1}$ is a Sturmian word in $\Str(a,b)$.
    \end{enumerate}
 \end{theorem}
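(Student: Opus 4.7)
The overall strategy is to reduce each case to the preceding Proposition (on $\alpha_2 w$) or to Lemmas~\ref{le:characteristicbrick} and~\ref{le:aperiodicbrick}, by way of a structural reduction and two symmetries of $A$.

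\textbf{Step 1 (structural reduction).} First I would show, by exhausting the admissible continuations letter by letter, that any right-infinite $w \in \Str(A)$ has one of the forms appearing in parts (1)--(3): if $s(w)=2$ then $w \in \Str(a,b) \cup \Str(a^{-1},b^{-1})$; if $s(w)=1$ then $w = \alpha_2 u$ with $u \in \Str(a,b)$ or $w = \alpha_1 u$ with $u \in \Str(a^{-1},b^{-1})$; and if $s(w)=3$ then $w = \beta_2^{-1} u$ with $u \in \Str(a,b)$ or $w = \beta_1^{-1} u$ with $u \in \Str(a^{-1},b^{-1})$. The crucial combinatorial observation is that after any letter the relations $\alpha_i\beta_i\in I$ together with the no-backtracking rule force the continuation into a unique alphabet $\{a,b\}$ or $\{a^{-1},b^{-1}\}$, and these two alphabets can never be mixed within a single string (concatenations such as $b\cdot b^{-1}$, $a\cdot b^{-1}$, $b\cdot a^{-1}$, $a\cdot a^{-1}$ all fail by an immediate check). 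The same enumeration also shows that every double-infinite string lies entirely in $\Str(a,b)$ or entirely in $\Str(a^{-1},b^{-1})$.

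\textbf{Step 2 (symmetries).} I would use two symmetries of $A$: the $k$-algebra automorphism $\sigma$ defined by $\alpha_1 \leftrightarrow \alpha_2$ and $\beta_1 \leftrightarrow \beta_2$, which preserves $I$ and swaps $a\leftrightarrow a^{-1}$, $b\leftrightarrow b^{-1}$; and the $k$-algebra anti-automorphism $\tau$ defined by the vertex swap $1\leftrightarrow 3$ and the arrow swap $\alpha_i \leftrightarrow \beta_i$, which also preserves $I$. A direct letter-by-letter computation shows that $\tau$ induces a $k$-linear duality on $\modu A$ that identifies $M(\alpha_2 b w')$ with $M(\beta_2^{-1} a \widehat{w'})$, where $\widehat{\,\cdot\,}$ is the letter swap $a\leftrightarrow b$ on $\Str(a,b)$. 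Both symmetries preserve bricks, and the characteristic Sturmian property is preserved by $\sigma$, $\tau$, and $\widehat{\,\cdot\,}$ (all three come down to either permuting the two letters of the alphabet or inverting the slope).

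\textbf{Step 3 (dispatch of the cases).} Part (1), $\alpha_2$ subcase, is immediate from the preceding Proposition; the $\alpha_1$ subcase follows by applying $\sigma$. Part (3) follows from part (1) via $\tau$, with $\sigma$ supplying the $\beta_1^{-1}$ subcase. For part (2), combine Lemma~\ref{le:characteristicbrick} (strong inner brick $\Leftrightarrow$ characteristic Sturmian, for right-infinite aperiodic strings in $\Str(a,b)$) with Lemma~\ref{le:aperiodicbrick} (brick $\Leftrightarrow$ strong inner brick and aperiodic); since characteristic Sturmian words are automatically aperiodic, the two conditions coincide, and $\sigma$ handles the $\Str(a^{-1},b^{-1})$ half. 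Part (4) is immediate from $M(w) \cong M(w^{-1})$. For part (5), Step~1 places $w$ in $\Str(a,b)$ or $\Str(a^{-1},b^{-1})$; Proposition~\ref{Lemma sturmian inner-bricks} gives strong inner brick iff Sturmian, and Lemma~\ref{le:aperiodicbrick} promotes this to brick iff additionally aperiodic, which is automatic.

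\textbf{Main obstacle.} The principal technical hurdles will be, first, making the structural enumeration in Step~1 airtight by carefully verifying relation-by-relation that no string can interpolate between the $\Str(a,b)$ and $\Str(a^{-1},b^{-1})$ worlds; and second, tracking the effect of $\tau$ on strings precisely enough to confirm the \emph{correct} leading letter ($a$ after $\beta_2^{-1}$, rather than $b$) in part (3). Neither is conceptually deep, but both are easy to mis-align, and the asymmetry between $\alpha_2 b w'$ and $\beta_2^{-1} a w'$ in the statement makes the $\tau$-computation the single most error-prone piece of the argument.
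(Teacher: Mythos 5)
Your proposal is correct and follows essentially the route the paper intends: the paper offers no separate proof of Theorem \ref{Thm: sturmian words of double-Kronecker}, asserting only that it ``follows from our analysis above,'' and your Steps 1--3 assemble precisely those ingredients (the proposition on $M(\alpha_2 w)$, Lemmas \ref{le:characteristicbrick} and \ref{le:aperiodicbrick}, Proposition \ref{Lemma sturmian inner-bricks}), supplemented by the structural reduction and the two symmetries that the paper leaves implicit; your letter-by-letter computations ($\sigma(a)=a^{-1}$, $\sigma(b)=b^{-1}$, $\tau(\alpha_2)=\beta_2^{-1}$, $\tau(b)=a$) do line up with the asymmetric phrasing of parts (1) and (3). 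The one point you should repair is the justification that $\tau$ preserves bricks: the modules involved are infinite-dimensional, so they do not lie in $\modu A$, and the $k$-linear dual $\Hom_k(M(w),k)$ of an infinite-dimensional string module is not the string module of the reversed string, so ``$\tau$ induces a $k$-linear duality on $\modu A$'' is not a valid mechanism here. Instead, invoke Proposition \ref{infinite-dimensional bricks and graph maps}: $M(u)$ is a brick if and only if its only graph self-map is the identity, and reversing all arrows interchanges quotient envelopes with submodule envelopes, hence carries graph maps of $M(u)$ bijectively to graph maps of $M(\tau(u))$; this purely combinatorial symmetry gives the brick-preservation you need without any appeal to vector-space duality.
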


 \subsection*{Some related combinatorics of words over gentle algebras}\label{Subsect: Some related combinatorics of words over gentle algebras}
 In this short subsection, we highlight some other recent studies on the linkages between combinatorics of (binary) words and the representation theory of gentle algebras. In particular, we compare our characterization of Sturmian words (see Theorem \ref{Thm: sturmian words of double-Kronecker}) with the main result of \cite{DL+}, where the authors give a new characterization of the perfectly clustering words in terms of some bricks over a particular family of gentle algebras (see also \cite{KS}). The authors study the $n$-fold Kronecker algebra $A_n$ with quiver given by Figure \ref{Fig: n-fold Kronecker}.

  \begin{figure}[h]
    \centering
    \[\begin{tikzcd} Q_{n}: &
 1 \arrow[r, , bend left] \arrow[r, , bend right] & 2 \arrow[r, , bend left] \arrow[r, , bend right] & \cdots \arrow[r, bend right] \arrow[r, bend left] & {n-1} \arrow[r, , bend left] \arrow[r, , bend right] & n & 
\end{tikzcd}\]
    \caption{$n$-fold Kronecker gentle algebra.}
    \label{Fig: n-fold Kronecker}
\end{figure}
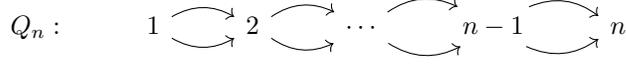
 
They naturally associate to $A_n$ some brick bands $z_1, \ldots, z_{n-1}$, all starting and ending at vertex $1$, that can be composed freely, and they consider some words in the $z_i$ called perfectly clustering words. They show that the perfectly clustering words in the $z_i$ are in bijection with the brick bands that start and end at vertex $1$. It turns out that when $n=3$, which is the double-Kronecker gentle algebra, their theorem, in our language, can be stated as follows
\begin{theorem}[Special case of \cite{DL+}]\label{Thm: perfectly clustering words}
    The brick bands starting and ending at vertex $2$ over the double-Kronecker gentle algebra are exactly the words $w$ in $\Str(a,b)$ where $bwa$ is a Christoffel word.
\end{theorem}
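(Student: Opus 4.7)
The plan is to combine a band-module analog of Lemma \ref{Strong Inner Brick} with the classical bijection between primitive balanced cyclic binary words and Christoffel words. First, I would verify that every band starting and ending at vertex $2$ is, up to inversion and cyclic rotation, an element of $\Str(a,b)$. Any cyclic string at vertex $2$ decomposes uniquely as a concatenation of length-two cyclic strings at vertex $2$, namely letters from $\{a, a^{-1}, b, b^{-1}\}$. Using the relations $\alpha_1\beta_1, \alpha_2\beta_2 \in I$ together with condition (P1), a direct case analysis shows that one cannot concatenate a letter of $\{a, b\}$ with a letter of $\{a^{-1}, b^{-1}\}$ without creating a forbidden subpath in $I$ or a formal cancellation. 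Since a band is identified with its inverse, every band at vertex $2$ has a representative in $\Str(a,b)$, and primitivity becomes the requirement that $w$ not be a proper power of a shorter word.

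Next, I would establish a cyclic analog of Lemma \ref{Strong Inner Brick}: the band $w \in \Str(a,b)$ is a brick band if and only if the periodic double-infinite string $\widetilde{w} := \cdots www \cdots$ contains no finite substring $x$ such that both $axa$ and $bxb$ occur as subwords of $\widetilde{w}$. Following the strategy of Section \ref{Subsection: Bricks and inner bricks over gentle algebras}, the endomorphisms of the minimal band module $M(w,1,\lambda)$ are controlled by cyclic graph maps, and a non-identity such graph map corresponds to a finite substring of the cyclic word admitting both a submodule envelope and a quotient envelope. Repeating the tabulation of envelopes from the proof of Lemma \ref{Strong Inner Brick}, with both envelopes forced to be based at vertex $2$ (hence necessarily translating to $a,b$-envelopes), yields the claimed equivalence. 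Since $\widetilde{w}$ is periodic, the condition of avoiding all $axa$ and $bxb$ patterns is precisely the classical notion of \emph{balance}.

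The final step is purely combinatorial: an infinite periodic binary word $\widetilde{w}$ with primitive period $w$ is balanced if and only if $w$ is a conjugate of a Christoffel word, and among the cyclic rotations of such a primitive period, exactly one yields $w$ with $bwa$ itself a Christoffel word. This normalization corresponds to cutting the periodic lattice path precisely between consecutive lattice-point crossings $(0,0)$ and $(q,p)$ of the defining line of rational slope $p/q$. Combining this bijection with the characterization of the previous paragraph completes the proof.

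The main obstacle is the band-module analog of Lemma \ref{Strong Inner Brick}: because band modules depend on the parameter $\lambda \in k^*$ and their underlying representations live on a ``cyclic'' vector space rather than an interval, one must carefully justify that the endomorphism algebra of the minimal band module is governed by cyclic graph maps, rather than by the action of a polynomial variable; the required formalism can be found in \cite{MP1, BPS}. The purely combinatorial step is standard (see, e.g., \cite{Lo}), but it requires care to verify that the unique cyclic rotation singled out by the $bwa$-Christoffel normalization matches the convention.
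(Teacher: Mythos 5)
The paper offers no proof of this statement to compare against: Theorem~\ref{Thm: perfectly clustering words} is quoted, without argument, as a translation of the main result of \cite{DL+} specialized to $n=3$. Judged on its own, your strategy is the natural one and its first two steps are sound: the case analysis showing that every band through vertex $2$ lies, up to inversion, in $\Str(a,b)$ is correct, and the cyclic analogue of Lemma~\ref{Strong Inner Brick} --- $w$ is a brick band iff the periodization $\widetilde{w}$ admits no factor $x$ with both $axa$ and $bxb$ occurring, iff $\widetilde{w}$ is balanced --- is the right criterion (the deferred justification that $\End_A(M(w,1,\lambda))$ is controlled by cyclic graph maps is indeed the technical heart, but it is available in the sources you cite).

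The gap is in your final normalization step. It is false that every primitive word with balanced periodization has exactly one cyclic rotation $w$ for which $bwa$ is a Christoffel word, and false that every $w$ with $bwa$ Christoffel is primitive. Concretely, take $w = ab = \alpha_1^{-1}\alpha_2\beta_1\beta_2^{-1}$: this is a band, its band modules have dimension vector $(1,2,1)$, and a direct computation (the two maps $\alpha_1,\alpha_2$ send the basis vector of $V_1$ to a basis of $V_2$, forcing any endomorphism to be scalar) gives $\End_A(M(ab,1,\lambda))=k$, so $ab$ is a brick band; its periodization $\cdots ababab \cdots$ is balanced, consistent with your criterion; yet its only rotations in $\Str(a,b)$ are $ab$ and $ba$, and neither $baba$ nor $bbaa$ is a Christoffel word (the Christoffel words of length $4$ are $baaa$ and $bbba$). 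Conversely, $baaa$ is the Christoffel word of slope $3$, so $aa$ belongs to the set $\{w : bwa \text{ is Christoffel}\}$, but $aa=a^2$ is not primitive and hence not a band. What your balance argument actually establishes is that the brick bands are precisely the primitive words in $\Str(a,b)$ that are \emph{conjugates} of Christoffel words (equivalently, the primitive perfectly clustering words, which is the form the statement takes in \cite{DL+}). Passing from that to the ``$bwa$ is a Christoffel word'' phrasing changes the length by two --- $bwa$ is a Christoffel word of a different slope from the one $w$ is conjugate to --- and yields a genuinely different set, as the two examples above show; this step cannot be repaired by adjusting conventions, so the bridge you assert between the balanced-cyclic-word characterization and the theorem's normalization does not exist.
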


As observed in Proposition \ref{link Sturmian and Christoffel}, (characteristic) Sturmian words can be seen as limits of Christoffel words. Hence, we can interpret our infinite string bricks as some limits of finite brick bands. We also observe that truncating a characteristic Sturmian word in an arbitrary way does not, in general, give a Christoffel word.

\section{Single-kissing brick bands}

In this section, we generalize our results from the preceding sections. In particular, to extend our setting from the double-Kronecker quiver treated in Section \ref{section: double-Kronecker algebra}, we consider a gentle algebra $A = kQ/I$ and let $a = za',b = zb'$ be two strings in $\Str(A)$ which give rise to brick-bands. We further assume that $a$ and $b$ start (and end) at vertex $x$, and they do not pass through $x$ in between. Moreover, assume that $ab$ and $ba$ are strings. This particularly implies that any binary word in letters $a$ and $b$ belongs to $\Str(A)$. Furthermore, we assume that $z$ gives rise to a kiss from $b^2$ to $a^2$ and that this is the only kiss from a rotation of $b$ to a rotation of $a$.
%that none of the pairs $(b^2, a), (b,a^2), (a^2, b^2)$ are kissing while the pair $(b^2, a^2)$ admits a single kiss at $w$. 
Without loss of generality, we can therefore assume that $a'$ starts with the inverse of an arrow, say $\alpha_1^{-1}$, and ends with a direct arrow, say $\alpha_2$, while $b'$ starts with a direct arrow, say $\beta_1$, and ends with an inverse arrow, say $\beta_2^{-1}$; see Figure \ref{Kiss Section 4}.

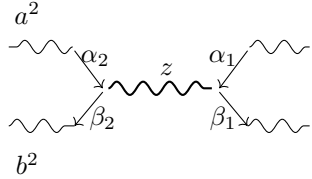
\begin{figure}[h]
    \centering
    \begin{tikzpicture}[scale=0.50] 
%\caption{Kiss between $b^2$ and $a^2$}
\draw  [-,decorate,decoration=snake][thick] (0.15,0) --(2.9,0);
\node at (1.7,0.5) {$z$};
%\node at (1.7,-0.4) {$b$};
------
\draw [<-] (0,0) -- (-0.75,1);
\draw [-,decorate,decoration=snake] (-0.8,1.1) -- (-2.5,1.1);
\node at (-0.2,0.8) {$\alpha_2$};
\node at (-2,2) {$a^2$};
%\node at (5.5,2) {$b$};
-----
\draw [<-] (3.1,0) -- (3.85,1);
\draw [-,decorate,decoration=snake] (3.9,1.1) -- (5.5,1.1);
\node at (3.2,0.8) {$\alpha_1$};

------
\draw [<-] (-0.75,-1) -- (0,-0.1);
\draw [-,decorate,decoration=snake] (-2.5,-1) -- (-0.75,-1);
\node at (0,-0.8) {$\beta_2$};
\node at (-2,-2) {$b^2$};
%\node at (5.5,-2) {$c$};
------
\draw [<-] (3.85,-1) -- (3.1,-0.1);
\draw [-,decorate,decoration=snake] (3.85,-1) -- (5.5,-1);
\node at (3.2,-0.8) {$\beta_1$};
\end{tikzpicture}
    \caption{Schematic presentation of a kiss from $b^2$ to $a^2$ (or from a rotation of $b$ to a rotation of $a$)}
    \label{Kiss Section 4}
\end{figure}

\begin{remark} From our assumption on $x$, it follows that
    \begin{enumerate}
        \item $\alpha_2\beta_2 \in I$  and $\alpha_1\beta_1 \in I$.
        \item  $\alpha_1, \alpha_2, \beta_1, \beta_2$ are pairwise distinct.  Moreover, these $4$ arrows do not appear anywhere else in $a,b$ other than the places shown on Figure \ref{Kiss Section 4}.
        \end{enumerate}
\end{remark}

\begin{definition}
Let $s$ be a finite string over $A$. By $x(s)$, called the \emph{$x$-count of $s$}, we denote the number of times that vertex $x$ is visited by the walk induced by $s$.  
\end{definition}

In the sequel, we will make use of any terminology of the previous sections, when it applies. For instance, we use the notion of envelope of an occurrence of a substring inside a string, and the corresponding notions of submodule substring (or envelope) and quotient substring (or envelope). In order to use the tools from the previous sections, the following lemma is crucial.

\begin{lemma} \label{Lemma: followed by w}
    Let $w \in \Str(a,b)$ and let $s$ be a finite string. If $s$ appears both as a submodule and a quotient substring in $w$,  then $s= s'z$ where $s' \in \Str(a,b)$. \label{longer kisses}
\end{lemma}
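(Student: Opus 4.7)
The plan is to generalize the case analysis underlying Lemma~\ref{Strong Inner Brick} to this more general setting by pinpointing the endpoints of $s$ inside the alternating block decomposition $w = z\,x_{i_1}\,z\,x_{i_2}\,z\,x_{i_3}\,\cdots$, where each $x_{i_j}\in\{a',b'\}$. Let $y$ denote the terminal vertex of $z$. Suppose $s$ has one occurrence as a submodule substring and one as a quotient substring in $w$; by Lemma~\ref{Lemma envelopes}, the letter of $w$ immediately following $s$ is an inverse arrow at the submodule occurrence and a direct arrow at the quotient occurrence, and analogously on the left.

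First, I would show that the terminal vertex $v_t$ of $s$ must equal $y$. Since the letter of $w$ following the two occurrences of $s$ must differ between direct and inverse, $v_t$ must be a vertex of $w$ carrying two distinct outgoing continuations, one direct and one inverse. The vertex $y$ has this property, since the letter following each $z$-block in $w$ is $\alpha_1^{-1}$ (when the next block is $a$) or $\beta_1$ (when the next block is $b$). To rule out every other candidate I would combine the following ingredients: at each interior position of the fixed words $z,\,a',\,b'$, the next letter of $w$ is uniquely determined; the brick-band hypothesis on $a$ precludes any direct/inverse branching at a vertex visited twice by $a^\infty$ (which would yield a self-kiss inside $a^\infty$, contradicting that $M(a^\infty)$ is a brick), and symmetrically for $b$; and the single-kiss hypothesis between rotations of $a$ and $b$ confines any branching that mixes $a$-positions with $b$-positions to the $z$-kiss itself. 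A mirror argument at the left endpoint forces the starting vertex $v_s$ of $s$ to be $x$, the unique vertex of $w$ admitting both an incoming direct letter ($\alpha_2$, from the end of an $a$-block) and an incoming inverse letter ($\beta_2^{-1}$, from the end of a $b$-block).

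Because $a$ and $b$ do not pass through $x$ in their interiors, $x$ is visited by $w$ only at the boundaries between successive blocks; hence the occurrence of $s$ starts exactly at the beginning of some block, so $s$ begins with a full copy of $z$. Symmetrically, the occurrence ends at the end of a $z$-block, so $s$ ends with a full copy of $z$. Between its initial and terminal $z$, the substring $s$ alternates $z$'s with copies of $a'$ or $b'$; thus $s = z\,x_{j_1}\,z\,x_{j_2}\,\cdots\,z\,x_{j_k}\,z$ for some (possibly empty) sequence $x_{j_\ell}\in\{a',b'\}$. Regrouping as $s = (z\,x_{j_1})\cdots(z\,x_{j_k})\,z$ gives $s = s'\,z$, where $s' := (z\,x_{j_1})\cdots(z\,x_{j_k}) \in \Str(a,b)$, taken to be the trivial string at $x$ when $k = 0$.

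The main obstacle is the first step: rigorously ruling out every vertex of $w$ other than $y$ as a possible endpoint of $s$ (and symmetrically for $v_s$). This is exactly where the brick-band hypothesis on $a$ and $b$ and the single-kiss hypothesis between their rotations become indispensable; without them, potential branching vertices in $w$ unrelated to the $z$-kiss could arise and produce occurrences of $s$ that are both submodule and quotient substrings but are not of the form $s'\,z$.
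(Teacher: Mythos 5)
Your target configuration is the right one, and you have correctly located where the work lies; but the step you yourself flag as ``the main obstacle'' is a genuine gap, and the vertex-based reduction you build around it does not close it. Knowing that the two occurrences of $s$ are followed by letters of opposite type is information about two \emph{positions} of $w$, not about the vertex $t(s)$: a vertex of $Q$ may be visited by $w$ at many positions inside $z$, $a'$ and $b'$, each with its own uniquely determined continuation, so ``$v_t$ carries two distinct outgoing continuations, one direct and one inverse'' is not a well-posed property of a vertex that you can verify for $y$ and refute for all others. The problem surfaces concretely in your final paragraph: the standing hypothesis that $a$ and $b$ do not pass through $x$ in their interiors lets you upgrade ``$s$ starts at $x$'' to ``$s$ starts at a block boundary,'' but there is no analogous hypothesis for $y=t(z)$, so ``symmetrically, the occurrence ends at the end of a $z$-block'' does not follow from $t(s)=y$; the vertex $y$ may perfectly well recur in the interior of $a'$ or $b'$. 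What you actually need is the positional statement that both occurrences of $s$ terminate immediately after the $z$-prefix of a block, and your proposed route to it --- a global classification of all branching positions of $w$ via the brick-band and single-kiss hypotheses --- is precisely the part left unexecuted; note also that those hypotheses, as stated, control self-kisses of $a$, self-kisses of $b$, and kisses from rotations of $b$ to rotations of $a$, so you would additionally have to explain why the remaining mixed direction causes no trouble.

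The paper sidesteps this classification entirely. It splits $s=p_1s'p_2$ at the first and last visits of $s$ to the distinguished vertex $x$; since $a$ and $b$ meet $x$ only at their endpoints, this decomposition is intrinsic to $s$, forces $s'\in\Str(a,b)$, and forces $p_1$ to be an ending substring of $a$ or of $b$ and $p_2$ a starting substring of one of them --- the \emph{same} fixed strings at every occurrence of $s$ in $w$. The letter of $w$ preceding (resp.\ following) $s$ is then determined by $p_1$ (resp.\ by $p_2$) alone, except exactly when $p_1$ is trivial (resp.\ when $p_2=z$, since $a$ and $b$ share the prefix $z$ and diverge immediately afterwards into $\alpha_1^{-1}$ versus $\beta_1$). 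A determined letter on either side is incompatible with having both a submodule and a quotient occurrence, whence $p_1=e_x$ and $p_2=z$, i.e.\ $s=s'z$. This is a short comparison of fixed finite strings against the ends of $a$, $b$ and $z$ and never requires inventorying the branching behaviour of $w$. If you wish to salvage your route, restate your first step in terms of positions within the block decomposition rather than vertices, and supply in full the kiss argument you sketch; as written, the proof is incomplete at its central step.
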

\begin{proof}
    Suppose $s$ appears both as a submodule and a quotient substring in $w$.
    Note that there is a unique string $s'$ in $\Str(a,b)$ with $s = p_1s'p_2$ for which we have $x(s') = x(s)$, $x(p_1) = x(p_2) = 1$ and $t(p_1) = x = s(p_2)$. 
    Observe that these properties imply that $p_1$ is either a proper ending substring of $a$ or of $b$. In other words, $p_1$ satisfies at least one of the following properties: 

    \begin{enumerate}
        \item $a = up_1$
        \item $b = vp_1$
    \end{enumerate}
    where $u,v$ have positive length. Assume that $p_1$ has positive length. Then only one of those can occur, say case (1). If $u$ ends with an arrow, then $s$ cannot appear in a quotient substring of $w$, which is a contradiction. Similarly, if $u$ ends with the inverse of an arrow, then $s$ cannot appear in a submodule substring of $w$, which is also a contradiction. Therefore, we conclude that $p_1 = e_x$ has length zero.
 
   We argue a similar thing for the end of $s$, namely

    \begin{enumerate}
        \item[$(3)$] $a = p_2u'$
        \item[$(4)$] $b = p_2v'$
    \end{enumerate}

   We claim that $p_2 = z$. For this, it suffices to prove that $p_2$ and $z$ have the same length. Suppose first that the length of $p_2$ is less than the length of $z$. In this case, $u',v'$ starts with the same letter. This contradicts the fact that $s$ appears in both a submodule and a quotient substring in $w$. Consider finally the case where the length of $p_2$ is greater than the length of $z$. In this case, only one of (3), (4), say case (3), can hold since $\alpha_1 \ne \beta_1$. If $u'$ starts with an arrow, then $s$ cannot appear in a submodule substring of $w$, which is a contradiction. Similarly, if $u'$ starts with the inverse of an arrow, then $s$ cannot appear in a quotient substring of $w$, which is also a contradiction. Therefore, we conclude that $p_2 = z$. This shows that $s = p_1 s' p_2 = s'z$ where $s' \in \Str(a,b)$.
    \end{proof}

    We now present the analogue of Lemma \ref{Strong Inner Brick} and Remark \ref{Inner Brick} from previous section.

\begin{lemma}
    \label{section4stronginner}
     Let $w$ be a string in $\Str(a,b)$. Then $M(w)$ is a strong inner brick if and only if there is no finite $s \in \Str(a,b), s \ne w$, such that ${\rm Env}_w^{a,b}(s) \cap \{asa, as, sa\} \ne \emptyset$ and ${\rm Env}_w^{a,b}(s) \cap \{bsb, bs, sb\} \ne \emptyset$.
\end{lemma}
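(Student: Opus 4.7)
The plan is to characterize strong inner bricks via the absence of non-trivial finite graph maps, and to translate the existence of such maps into the stated envelope condition using Lemma \ref{Lemma: followed by w}. By Theorem \ref{Thm-graph-maps}, non-trivial finite graph maps $M(w) \to M(w)$ correspond to finite substrings $s_0 \ne w$ of $w$ that appear both as a quotient substring (at some occurrence) and as a submodule substring (at another, possibly the same, occurrence). Hence $M(w)$ fails to be a strong inner brick if and only if such an $s_0$ exists, and the task is to reformulate this existence combinatorially in the $a,b$-block alphabet.

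I would invoke Lemma \ref{Lemma: followed by w} to force $s_0 = sz$ for some $s \in \Str(a,b)$, and then translate the submodule and quotient conditions on $s_0 = sz$ into envelope conditions on $s$. The right extension of $s_0 = sz$ in $w$, when it exists, is always the first letter appearing after $z$, which is $\alpha_1^{-1}$ if the next block in $w$ is $a$ and $\beta_1$ if it is $b$; the submodule condition then forces the next block to be $a$ and the quotient condition forces it to be $b$. The left extension is analyzed symmetrically, using $\alpha_2$ or $\beta_2^{-1}$ as the last letter of the previous block. Putting these together, $s_0 = sz$ is a submodule substring precisely when the $a,b$-envelope of $s$ at the corresponding occurrence lies in $\{asa, sa\}$, and it is a quotient substring precisely when the envelope lies in $\{bsb, sb\}$. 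The additional envelopes $as$ and $bs$ in the statement cover the boundary case where $s$ sits at the right end of $w$; here $s_0 = sz$ is no longer a substring of $w$, and one instead argues directly with $s_0 = s$. Since the first letter of $z$ is forced to be a direct arrow (otherwise the number of incoming arrows at $x$ would exceed two, contradicting gentleness together with the role of $\alpha_2$ and $\beta_2$ as kissing arrows), the only way for $s_0 = s$ to be a submodule substring is for its right extension to be trivial and its left extension to be $\alpha_2$, giving envelope $as$; dually, the quotient case at the right end of $w$ corresponds to envelope $bs$.

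The main challenge is a careful case-by-case bookkeeping: one has to verify that each envelope in the two sets arises from a valid choice of $s_0$ (either $sz$ or $s$) realizing a submodule or quotient substring of $w$, and conversely that any pair of submodule and quotient occurrences of a common $s_0$ contributes envelopes in the stated sets. This parallels the table-based analysis used in the proof of Lemma \ref{Strong Inner Brick}, with Lemma \ref{Lemma: followed by w} now playing the role of the vertex-$2$ constraint used there.
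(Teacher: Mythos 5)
Your overall strategy is the same as the paper's: reduce ``strong inner brick'' to the absence of a finite string occurring both as a submodule and as a quotient substring, invoke Lemma \ref{Lemma: followed by w} to force such a string to have the form $sz$ with $s \in \Str(a,b)$, and translate the two envelope conditions on $sz$ into the $a,b$-envelopes $\{asa,sa\}$ and $\{bsb,sb\}$; that part of your argument matches the paper's. You go further than the paper in isolating the boundary case where $s$ sits at the right end of $w$ (so that $sz$ is not a substring), which is a real subtlety: the paper's own converse direction asserts that $as \in {\rm Env}^{a,b}_w(s)$ produces ``an occurrence of $sz$,'' which does not exist there. So you have correctly located the delicate point.

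However, your resolution of that boundary case rests on a false claim. The first letter of $z$ is \emph{not} forced to be a direct arrow: at $x$ the kiss configuration contributes only the incoming arrow $\alpha_2$ and the outgoing arrow $\beta_2$, so if $z$ begins with $\zeta^{-1}$ then $\zeta$ is merely a second incoming arrow at $x$, which the gentle conditions allow. Moreover, even when $z$ is nontrivial and does begin with a direct arrow $\zeta$, your case analysis does not close: an interior occurrence of $s$ (ending at a block boundary) then has right extension $\zeta$, an arrow, so $s$ itself is a quotient substring whenever it is preceded by $b$ or sits at the left end, \emph{irrespective of the following block}; its $a,b$-envelope may be $bsa$, which lies in neither displayed set. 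Concretely, take $z$ nontrivial with direct first letter $\zeta$, $w = aba$ and $s_0 = e_x$: the right-end occurrence has envelope $\alpha_2 e_x$ (submodule) and the occurrence between the $b$ and the final $a$ has envelope $\beta_2^{-1} e_x \zeta$ (quotient), so $M(w)$ is not a strong inner brick, yet no $t$ satisfies the displayed condition. Thus the boundary case cannot be dispatched as you propose; it genuinely requires $z$ to be trivial (as in the double-Kronecker case), or $w$ to have no right end, or an additional hypothesis, and your write-up should flag this rather than derive it from gentleness.
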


\begin{proof}
We know that $M(w)$ is not a strong inner brick if and only if there is a finite string $s$ such that $s$ appears both as a submodule and a quotient substring of $w$. It follows from Lemma \ref{Lemma: followed by w} that $s = w'z$ where $w' \in \Str(a,b)$. Let $s_1$ denote an occurrence of $s$ in $w$ corresponding to a quotient envelope. Then ${\rm env}_w(s_1) = \mu_1 s_1 \mu_2 = \mu_1 w' z \mu_2$ where $\mu_1$ is trivial or $\mu_1 = \beta_2^{-1}$, and $\mu_2$ is trivial or $\mu_2 = \beta_1$. We note that the case where both $\mu_1, \mu_2$ are trivial does not happen since $s_1 \ne w$. These cases lead to ${\rm env}^{a,b}_w(s_1) \in \{bw', w'b,bw'b\}$. Similarly, considering $s_2$ an occurrence of $s$ as a submodule envelope, we get that ${\rm env}^{a,b}_w(s_2) \in \{aw', w'a,aw'a\}$.

Conversely, it is clear that if ${\rm Env}_w^{a,b}(s) \cap \{asa, as, sa\}$ is non-empty, then there is an occurrence of $sz$ that gives a submodule substring of $w$. Similarly, if ${\rm Env}_w^{a,b}(s) \cap \{bsb, bs, sb\}$ is not empty,  then there is an occurrence of $sz$ that gives a quotient substring of $w$.
\end{proof}

\begin{remark}    
\label{section4inner}
 Considering the similar notion of inner-brick, for which we discard submodule and quotient substrings at the extremities of $w$, we see that $M(w)$ is an inner brick if and only if there is no finite $s \in \Str(a,b)$ such that ${\rm Env}_w^{a,b}(s) \cap \{asa\} \ne \emptyset$ and ${\rm Env}_w^{a,b}(s) \cap \{bsb\} \ne \emptyset$. 

    \end{remark}

\begin{remark}
    A left infinite string $s$ in $\Str(a,b)$ is a brick if and only if the right infinite string $s^T$ is a brick.
\end{remark}

We end this section by adapting the results of the previous section about strings in $\Str(a,b)$. We have the following theorem.

\begin{theorem}
Let $A = kQ/I$ be a gentle algebra and assume that $a,b$ are finite strings starting and ending at $x$ and satisfying the above conditions. Let $w \in \Str(a,b)$ be infinite. Then
\begin{enumerate}
    \item If $w$ is one-sided infinite, then $M(w)$ is a brick if and only if $w$  or $w^T$ is a characteristic Sturmian word.
    \item If $w$ is double-infinite, then $M(w)$ is a brick if and only if $w$ is a Sturmian word.
\end{enumerate}
\end{theorem}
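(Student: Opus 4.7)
The plan is to reproduce the strategy used for the double-Kronecker algebra in Section~\ref{section: double-Kronecker algebra}, relying on the new envelope characterizations in Lemma~\ref{section4stronginner} and Remark~\ref{section4inner}. For the one-sided case I would first invoke the remark above that $M(w)$ is a brick iff $M(w^T)$ is a brick, reducing to right-infinite $w$.

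The backbone is the analog of Lemma~\ref{le:aperiodicbrick}: for $w \in \Str(a,b)$ infinite with $M(w)$ a strong inner brick, $M(w)$ is a brick iff $w$ is aperiodic. The proof transcribes verbatim --- Proposition~\ref{infinite-dimensional bricks and graph maps} reduces bricks to the non-existence of non-identity graph maps, the strong inner brick property kills finite ones, and infinite-dimensional non-identity graph maps exist iff some infinite substring occurs in $w$ more than once, i.e.\ $w$ is eventually periodic. The only input beyond the double-Kronecker proof is that positive-length substrings of $w \in \Str(a,b)$ do not have their formal inverses as substrings of $w$, which still holds here.

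The heart of the argument is the right-infinite case: $M(w)$ is a brick iff $w$ is characteristic Sturmian. I would prove that aperiodic right-infinite $w \in \Str(a,b)$ is a strong inner brick iff both $aw$ and $bw$ are inner bricks. Using Lemma~\ref{section4stronginner} and noting that envelopes of type $as, bs$ cannot occur (no right extremity), a strong inner brick failure requires a finite $s \ne w$ whose envelopes hit $\{asa, sa\}$ and $\{bsb, sb\}$; the boundary envelopes $sa, sb$ correspond to $s$ being a prefix of $w$ and, upon prepending $a$ or $b$, become interior occurrences $asa$ in $aw$ or $bsb$ in $bw$. Conversely, an interior $asa$ inside $aw$ either lies entirely in $w$ or uses the prepended $a$ (yielding a $sa$-prefix), and analogously for $bsb$ inside $bw$. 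Thus strong inner brick of $w$ is equivalent to inner brick of $aw$ and inner brick of $bw$, which by Proposition~\ref{Prop: For aperiodic, Sturmian is axa, bxb avoiding} is equivalent to $aw, bw$ both being Sturmian, which by Proposition~\ref{sturmian with lattice point} is equivalent to $w$ being characteristic Sturmian.

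For the double-infinite case, the lack of extremities forces inner brick and strong inner brick to coincide, so by Proposition~\ref{Prop: For aperiodic, Sturmian is axa, bxb avoiding} combined with the first step, $M(w)$ is a brick iff $w$ is aperiodic and avoids the $asa, bsb$ coincidence pattern, i.e.\ iff $w$ is Sturmian. The main obstacle is the bookkeeping in the right-infinite case --- precisely verifying that every boundary envelope translates to an interior $asa$ or $bsb$ pattern after prepending, and conversely that every such interior pattern arises this way. A further subtlety is that the reduction of the left-infinite case depends on the stated remark that $M(w)$ is a brick iff $M(w^T)$ is one, which itself deserves a justification via the symmetry of the kiss configuration of $a, b$.
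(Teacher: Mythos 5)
Your proposal is correct and follows essentially the same route the paper intends: the paper states this theorem without an explicit proof, indicating only that one adapts the Section 4 arguments via Lemma \ref{section4stronginner} and Remark \ref{section4inner}, and your transcription (the aperiodicity lemma via Proposition \ref{infinite-dimensional bricks and graph maps}, the equivalence ``$w$ strong inner brick $\iff$ $aw$ and $bw$ inner bricks'' feeding into Propositions \ref{Prop: For aperiodic, Sturmian is axa, bxb avoiding} and \ref{sturmian with lattice point}, and the coincidence of inner and strong inner bricks in the double-infinite case) is exactly that adaptation. The two subtleties you flag (the boundary-envelope bookkeeping and the $w \leftrightarrow w^T$ reduction via $M(w)\cong M(w^{-1})$) are real but resolve as you expect, using that $\alpha_1,\alpha_2,\beta_1,\beta_2$ occur nowhere else in $a,b$, so no infinite substring of $w$ has its inverse as a substring of $w$.
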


\textbf{Acknowledgments.}
The authors would like to thank Aaron Chan and Hugh Thomas, for some discussions at the earlier stages of this project, and  Sébastien Labbé, for providing some good references on the combinatorics of words. 
KM was supported by Early-Career Scientist JSPS Kakenhi grant number 24K16908. CP was supported by the Natural Sciences and Engineering Research Council of Canada (RGPIN-2018-04513) and by the Canadian Defence Academy Research Programme.

\end{document}